\newtheorem{theo}{Theorem}[section]
\newtheorem{lem}[theo]{Lemma}
\newtheorem{coro}[theo]{Corollary}
\numberwithin{equation}{section}
\def\qed{\hfill \rule{4pt}{7pt}}
\begin{document}

\newcommand{\mycir}[1]{\ensuremath{#1^{r}}}
\newcommand{\mycircle}[1]{\ensuremath{\big(#1\big)^{r}}}
\newcommand{\myequation}{\ensuremath{8.956^r + 1.036^r = 10.992^r}}

\title{Circumference of 3-connected cubic graphs}

\author{Qinghai Liu\footnote{email: qliu@fzu.edu.cn; partially
    supported by NSFC Projects 11301086 and 11401103, and the Natural Science Foundation of  Fujian  Province(2014J05004).}
\\Center for Discrete Mathematics\\ Fuzhou University\\ Fuzhou, 350002,
China\\ \medskip \\ Xingxing Yu\footnote{email: yu@math.gatech.edu; partially
  supported by NSF grants DMS-1265564 and DMS-1600738, and the Hundred Talents Program of Fujian Province.}\\School of Mathematics\\ Georgia Institute of Technology\\ Atlanta, GA 30332,\\
  \medskip\\
  Zhao Zhang\footnote{email: hxhzz@sina.com; partially
    supported by NSFC 11531011 and Xinjiang Talent Youth Project 2013711011}\\
College of Mathematics Physics and Information Engineering\\
Zhejiang Normal University\\
Jinhua, Zhejiang, 321004, China}

\maketitle

\begin{abstract}
The circumference of a graph is the length of its longest cycles.
 Jackson established a conjecture of Bondy by showing that the circumference of
 a 3-connected cubic graph of order $n$ is  $\Omega(n^{0.694})$.
Bilinski {\it et al.} improved this lower bound to $\Omega(n^{0.753})$ by
studying large Eulerian subgraphs in 3-edge-connected graphs.
In this paper, we further improve this lower bound to $\Omega(n^{0.8})$.
This is done by  considering certain 2-connected cubic graphs, finding cycles through two given edges, and
distinguishing the cases whether or not these edges are adjacent.
\end{abstract}

\newpage

\section{Introduction}

Tait \cite{tait} conjectured in 1880 that every 3-connected cubic planar graph contains a Hamilton cycle.
This conjecture remained open until a counterexample was found in 1946 by Tutte \cite{tutte}.
There has since been extensive research concerning longest cycles in
graphs, see \cite{bilinski} for more references.
We use $|G|$ to denote the {\it order} of a graph $G$, i.e., the number of vertices in $G$; and
refer to the length of a longest cycle in $G$ as the \textit{circumference} of $G$.
We will be concerned with lower bounds on the circumference of 3-connected cubic graphs.

Barnette \cite{barnette} showed that every 3-connected cubic graph of order $n$ has circumference
$\Omega(\log n)$. Bondy and Simonovits \cite{bondy} improved this bound to $\exp(\Omega(\sqrt{\log n}))$
and conjectured that it can be improved further to $\Omega(n^c)$ for some constant $0<c<1$.
This conjecture was confirmed by Jackson \cite{jackson}, with $c=\log_2(1+\sqrt5)-1 \approx0.694$.
Bondy and Simonovits  \cite{bondy} constructed an infinite family of 3-connected cubic graphs with
circumference $\Theta(n^{\log_98})\approx \Theta(n^{0.946})$.

Recently,
Bilinski, Jackson, Ma and Yu \cite{bilinski} showed that
every 3-connected cubic graph of order $n$ has circumference
$\Omega(n^{\alpha})$, where $\alpha\approx0.753$ is the real root of $4^{1/x}-3^{1/x}=2$.
This is proved by reducing the problem to one about Eulerian subgraphs in
3-edge-connected graphs.

In this paper, we further improve this lower bound by considering certain
vertex weighted, 2-connected cubic graphs (multiple edges allowed). Let $G$ be a graph and let $w: V(G)\rightarrow
\mathbb{Z}^+$, where here $\mathbb{Z}^+$ denotes the set of non-negative
integers. For any $H\subseteq G$, we write $w(H):=\sum_{v\in V(H)}w(v)$.

\begin{theo}\label{thm-main}
Let $r=0.8$ and $c=1/(8^r-6^r)\approx 0.922$.
Let $G$ be a 2-connected cubic graph, let $w: V(G)\rightarrow \mathbb{Z}^+$, and let $e,f\in E(G)$. Suppose
every 2-edge cut in $G$ separates $e$ from $f$. Then there is a cycle $C$ in $G$ with $e,f\in E(C)$ such that
\begin{itemize}
 \item[$(a)$] $w(C)\geq w(G)^r$ when $e,f$ are adjacent, and
 \item[$(b)$] $w(C)\geq cw(G)^r$ when $e,f$ are not adjacent.
\end{itemize}
\end{theo}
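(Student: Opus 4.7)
I would argue by induction on $|V(G)|$. The base case consists of small 2-connected cubic multigraphs (for instance the two-vertex triple edge, in which $e,f$ are automatically adjacent, or $K_4$), where a Hamilton cycle through $e$ and $f$ yields $w(C)=w(G)$ so that both bounds are trivial. For the inductive step I distinguish according to the edge-connectivity of $G$.

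\textbf{Case 1 (reduction along a 2-edge cut).} Suppose $\{e_1,e_2\}$ is a 2-edge cut of $G$. By hypothesis it separates $e$ from $f$, so $G$ decomposes as $G_1\cup G_2$ with $e\in E(G_1)$ and $f\in E(G_2)$; let $u_1,u_2\in V(G_1)$ and $v_1,v_2\in V(G_2)$ be the endpoints of $e_1,e_2$. Form $G_1^{*}:=G_1+u_1u_2$ and $G_2^{*}:=G_2+v_1v_2$; both are 2-connected cubic multigraphs on strictly fewer vertices than $G$. The key verification is that every 2-edge cut of $G_i^{*}$ still separates the two distinguished edges (the inherited one and the newly added one), since any cut violating this would lift to a 2-edge cut of $G$ not separating $e$ from $f$. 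Applying induction in each $G_i^{*}$ yields cycles $C_i$ through both distinguished edges, and splicing $C_1$ with $C_2$ along $\{e_1,e_2\}$ produces a cycle $C$ in $G$ through $e$ and $f$ with $w(C)=w(C_1)+w(C_2)$. Note that $e$ and $f$ cannot be adjacent in this case (adjacent edges share a vertex and so cannot be separated by any 2-edge cut), so only bound (b) is needed; the four combinations of (a)/(b) returned by the induction on $G_1^{*}$ and $G_2^{*}$ all yield $w(C)\geq c\bigl(w(G_1)+w(G_2)\bigr)^r=cw(G)^r$, using the subadditivity of $x\mapsto x^r$ for $r\in(0,1)$ together with $c\leq 1$ (the critical combination being $w_1^r+cw_2^r\geq c(w_1+w_2)^r$).

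\textbf{Case 2 (the 3-edge-connected case) --- the main obstacle.} If $G$ admits no 2-edge cut then $G$ is 3-edge-connected; this case is forced whenever $e,f$ are adjacent. Here the plan is to perform a vertex- or edge-reduction internal to $G$: identify a suitable vertex $v\notin V(e)\cup V(f)$, delete it, and re-pair the three resulting degree-$2$ vertices to produce a smaller 2-connected cubic multigraph $G'$, or alternatively contract a carefully chosen short $e$-$f$ path; then apply induction to $G'$ with appropriately modified distinguished edges and lift the cycle produced back to $G$, reinserting the removed vertex (or the contracted structure) when possible. The principal technical challenge is twofold: ensuring that the hypothesis on 2-edge cuts survives (or can be repaired) in $G'$, and balancing the weight accounting so that $w(C)$ meets the required lower bound even in the tightest configurations --- in particular when $e$ and $f$ are close, so that any natural reduction interferes with both. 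The precise parameters $r=0.8$ and $c=1/(8^r-6^r)\approx 0.922$ are dictated by this worst case: the algebraic identity $\myequation$ encodes the critical configuration at which the induction just barely closes, and rules out smaller exponents.
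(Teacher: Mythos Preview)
Your Case 1 (the 2-edge cut reduction) is correct and matches the paper's treatment exactly: it is Claim~1 of Lemma~\ref{lem-main-b}. You are also right that adjacent $e,f$ force $G$ to be 3-connected, so only bound (b) is at stake when a 2-edge cut exists.

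The genuine gap is Case~2. The suggestion ``delete a suitable vertex $v\notin V(e)\cup V(f)$, re-pair the three degree-2 vertices, apply induction, and reinsert $v$ when possible'' is not a workable plan, and the paper does something quite different. A single such reduction produces \emph{one} smaller graph and hence one candidate cycle $C'$; if the lifted cycle misses $v$ you simply lose $w(v)$, and there is no mechanism to bound that loss against $w(G)^r$. The paper's proof never reduces by one vertex. Instead, for each endpoint $u_i$ of $e$ or $f$ and each edge $e_{ij}$ at $u_i$ (other than $e,f$), it identifies a \emph{maximal} piece $X_{ij}\subseteq G-\{u_1,\dots\}$ with $e_{ij}\in\partial_G(X_{ij})$ and $|\partial_G(X_{ij})|=3$. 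These pieces partition most of the weight of $G$, and contracting or suppressing different combinations of them yields \emph{several} smaller graphs to which the induction applies. Each produces a cycle through $e,f$ that is guaranteed to traverse certain $X_{ij}$'s (so Lemma~\ref{lem-merge} recovers $w(X_{ij})^r$ from each), and misses others. No single one of these cycles is always long enough; the argument is that the \emph{maximum} of their weights must be, and this is proved by a case analysis on the intersection pattern of the $X_{ij}$ together with the numerical inequalities of Lemma~\ref{lem-inequality}. In the adjacent case (Lemma~\ref{lem-main-a}) up to five candidate cycles $C_{12},C_{52},C_{54},C_5,C_z$ are constructed and seven functions $f_1,\dots,f_7$ are shown to have nonnegative maximum; in the nonadjacent case (Lemma~\ref{lem-main-b}) the analysis splits into cases $q=0,1,2$ according to how many pairs $X_{ij}=X_{kl}$ coincide.

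In short: the inductive engine is not ``remove one vertex and hope'', but ``build several reductions, each capturing a different large piece of the weight, and prove via tailored inequalities that at least one of the resulting cycles meets the bound''. Your outline is missing both the construction of the maximal pieces $X_{ij}$ and the multi-cycle/maximum argument that makes the exponent $0.8$ attainable.
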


We remark that in Theorem~\ref{thm-main}, we may further assume that the weight function $w$ satisfies $w(V(\{e,f\}))=0$.
For otherwise, we define a new weight function $w': V(G)\to \mathbb Z^+$ such that $w'(v)=w(v)$ for all $v\notin V(e)\cup V(f)$
and $w'(v)=0$ for all $v\in V(e)\cup V(f)$. Let $w_0=w(V(e)\cup V(f))$. Then $w(G)=w'(G)+w_0$.
If Theorem~\ref{thm-main} holds for $w'$ then there is a cycle $C$ such that $e,f\in E(C)$ and either $w'(C)\geq w'(G)^r$ or $w'(C)\geq cw'(G)^r$. Thus $w(C)=w'(C)+w_0$ and either $w(C)\geq (w(G)-w_0)^r +w_0\geq w(G)^r$ or $w(C)\geq c(w(G)-w_0)^r+w_0\geq cw(G)^r$. Thus, Theorem~\ref{thm-main} also holds for $w$. Hence, in Sections 3 and 4, we may assume $w$ is a weight function such that
$w(V(\{e,f\}))=0$.

In our proof of Theorem~\ref{thm-main} we divide $G$ into a few smaller
parts, find long cycles in some of these parts,
and merge these cycles into the desired cycle in $G$. The length of the cycle
will be guaranteed by various properties of the function $x^r$ (see
Lemma \ref{lem-inequality}). We will need structural information of
graphs obtained from a 3-connected cubic graph after certain operations,
and we will also need cycles through some specified edges and vertices in
such graphs. Those results are presented in  Section 2.  In Section 3, we
prove Theorem~\ref{thm-main}(a) by inductively applying
Theorem~\ref{thm-main} (both (a) and (b)); and
in Section 4, we prove Theorem~\ref{thm-main}(b) by inductively applying
Theorem~\ref{thm-main} (both (a) and (b)). In Section
5, we complete the proof of Theorem~\ref{thm-main} and give some concluding remarks.

We end this section with notation needed for our presentation.
Let $G$ be a 2-connected cubic graph and $e=uv\in E(G)$. Let $e_1,e_2$ be
the edges of $G-e$ incident with $u$,
and let $e_3,e_4$ be the edges of $G-e$ incident with $v$. Suppose $e_1$ and $e_2$ do not form multiple edges,
and $e_3,e_4$ do not form multiple edges.
We use $G\ominus e$ to represent the graph obtained from $G$
by deleting $e$ and then merging $e_1$ with $e_2$ (equivalently, suppressing
the degree 2 vertex $u$) and merging $e_3$ with $e_4$ (equivalently, suppressing
the degree 2 vertex $v$). So $G\ominus e$ is a cubic graph in which we use
$e_1$, or $e_2$, or $e_1=e_2$ (respectively, $e_3$, or $e_4$, or $e_3=e_4$) to
denote the edge resulting from the merging of $e_1$ and $e_2$ (respectively, $e_3$
and $e_4$).
If $H=G\ominus e$, then we also say $G=H\oplus e$, or $G=(H\oplus e,
e_i,e_j)$ for $i\in \{1,2\}$ and $j\in \{3,4\}$,  i.e.,
$G$ is obtained from $H$ by subdividing $e_i$ and $e_j$ and adding the edge $e$ between the new vertices.

Let $G$ be a graph and $A, B\subseteq G$. We use $[A,B]$ to represent the
set of edges with one end in $A$ and the other in $B$, and write $\partial_G(A):=[A,G-V(A)]$.
If $A$ is connected then we use $G/A$ to represent the graph obtained from $G$
by contracting $A$ to a single vertex (multiple edges are preserved but
loops are deleted).
In general, if $A_1, \dots, A_k\subseteq G$ are disjoint and connected then
$G/(A_1,\ldots,A_k)$ represents the graph obtained from $G$ by
contracting each $A_i$ to a single vertex, for $i=1, \ldots, k$. Note that
if $G$ is cubic and $|\partial_G(A_i)|=3$ for $1\le i\le k$
then $G/(A_1,\ldots,A_k)$ is also cubic.  For convenience, we view any
$U\subseteq V(G)$ as a subgraph of $G$ with vertex set $U$ and no edges.
Also, for any $F\subseteq E(G)$, we let $V(F)$ denote the set of vertices of $G$ incident with $F$.

\section{Useful lemmas}

In our proof of Theorem \ref{thm-main}, the following result will be used frequently.

\begin{lem}\label{lem-submod}
Suppose $A$ and $B$ are subgraphs of a 3-connected cubic graph $G$ with $|\partial_G(A)|
=|\partial_G(B)|=3$. If $V(A\cap B)\neq\emptyset$ and $V(A\cup B)\neq V(G)$ then $|\partial_G(A\cup B)|=3$.
\end{lem}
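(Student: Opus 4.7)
The plan is to apply the standard submodular inequality for edge cuts and then pin down both terms on its left-hand side using 3-connectivity. Recall that for any graph $G$ and any vertex subsets $X, Y \subseteq V(G)$,
\[
|\partial_G(X\cup Y)| + |\partial_G(X\cap Y)| \leq |\partial_G(X)| + |\partial_G(Y)|,
\]
which follows by counting each edge's contribution to both sides (an edge with both endpoints in $X\triangle Y$ but one in each of $X\setminus Y$, $Y\setminus X$ contributes $2$ on the right and $0$ on the left; all other edges contribute equally). Since the notation $\partial_G(\cdot)$ in the paper depends only on the vertex set of its argument, I would apply the inequality with $X=V(A)$ and $Y=V(B)$, so that the right-hand side equals $3+3=6$.

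Next, I would use 3-connectivity to bound the left-hand side from below. A 3-connected cubic graph is also 3-edge-connected, since the edge-connectivity sits between the vertex-connectivity and the minimum degree, both equal to $3$ here. Consequently, every non-empty proper vertex subset of $V(G)$ has edge boundary of size at least $3$. The hypothesis $V(A\cap B)\neq\emptyset$ makes $V(A)\cap V(B)$ non-empty, and since $V(A)\cap V(B)\subseteq V(A\cup B)\neq V(G)$ it is also a proper subset of $V(G)$; hence $|\partial_G(A\cap B)|\geq 3$. Similarly, $V(A)\cup V(B)$ contains $V(A\cap B)\neq\emptyset$ and is a proper subset of $V(G)$ by hypothesis, so $|\partial_G(A\cup B)|\geq 3$.

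Combining these estimates forces equality throughout the chain $3 \leq |\partial_G(A\cup B)| \leq 6 - |\partial_G(A\cap B)| \leq 3$, and so $|\partial_G(A\cup B)|=3$, as required. There is no real obstacle here; the only subtleties are verifying that $V(A\cap B)$ is a proper subset of $V(G)$ (which is automatic once the union is assumed proper) and noting that the hypotheses do not allow the degenerate cases where $V(A\cup B)=\emptyset$ or $V(A\cap B)=V(G)$, so the 3-edge-connectivity bound is legitimately applicable to both terms.
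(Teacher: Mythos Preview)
Your proof is correct and follows essentially the same approach as the paper: apply the submodular inequality for edge boundaries and use 3-edge-connectivity (from 3-connected cubic) to bound both $|\partial_G(A\cap B)|$ and $|\partial_G(A\cup B)|$ below by $3$, forcing equality. The paper's version is just a terser rendition of exactly this argument.
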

\begin{proof}
Since $G$ is 3-connected, $|\partial_G(A\cap B)|\geq
3$ and $|\partial_G(A\cup B)|\geq3$. Thus, by the following submodular inequality
$$3\leq|\partial_G(A\cup B)|\leq
|\partial_G(A)|+|\partial_G(B)|-|\partial_G(A\cap B)|\leq3.$$
This forces $|\partial_G(A\cup B)|=3$.
\end{proof}

We will need to find cycles that contain
certain given edges. It is well known that any two edges in a 2-connected
graph are contained in a cycle.
The following lemma due to  Lov\'asz \cite{lovasz} deals with three edges
in 3-connected graphs, see  \cite{aldred} for a proof.

\begin{lem}[Lov\'asz]\label{lem-lovasz}
Let $G$ be a 3-connected graph and $e_1,e_2,e_3$ three distinct edges of
$G$. If $\{e_1,e_2,e_3\}$ is not an edge cut of $G$, then there is a cycle
in $G$ containing  $\{e_1,e_2,e_3\}$.
\end{lem}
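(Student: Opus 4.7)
The plan is to reduce the statement to the classical question of when three specified vertices lie on a common cycle. I subdivide each edge $e_i=u_iv_i$ with a new vertex $x_i$, replacing $e_i$ by the path $u_ix_iv_i$, and call the resulting graph $G'$. Then a cycle in $G'$ through $x_1,x_2,x_3$ corresponds bijectively to a cycle in $G$ containing $e_1,e_2,e_3$, so it is enough to find the former. Since $G$ is $3$-connected, subdividing three edges leaves $G'$ still $2$-connected.

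I would then invoke the classical Watkins--Mesner theorem, which characterises when three given vertices of a $2$-connected graph lie on a common cycle: this fails exactly when there is an obstruction consisting of three connected pieces, one containing each of the three vertices, pairwise glued along vertex-separators of size at most $2$. Suppose such an obstruction existed in $G'$. I would pull it back to $G$ by suppressing each $x_i$ to restore $e_i$. A $2$-vertex separator of $G'$ that contains some $x_i$ must equal $\{u_i,v_i\}$ (since $x_i$ has degree $2$ with those two neighbours), and in the pullback to $G$ this interface corresponds exactly to the single edge $e_i$. A $2$-vertex separator of $G'$ that avoids every $x_i$ would be a $2$-vertex separator of $G$ itself, impossible by $3$-connectivity. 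Consequently the obstruction in $G'$ would decompose $G$ into three nonempty pieces whose only connecting edges are $e_1,e_2,e_3$, so that $\{e_1,e_2,e_3\}$ is an edge cut of $G$, contradicting the hypothesis.

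The main obstacle I foresee is the bookkeeping in this translation step: one must check that \emph{every} possible shape of the Watkins--Mesner obstruction, including degenerate configurations where one or more of the $x_i$ sits on a separator, pulls back cleanly to an edge cut of $G$ consisting of exactly $\{e_1,e_2,e_3\}$, and that the degree-$2$ subdivision vertices never create spurious $2$-separations unrelated to the edges $e_i$. If one wishes to avoid citing Watkins--Mesner, a direct induction on $|V(G)|$ works in the same spirit: start from a cycle $C_0$ through $e_1$ and $e_2$ (guaranteed by $2$-connectivity), and if $e_3\notin E(C_0)$ use Menger's theorem together with $3$-connectivity to produce internally disjoint paths that allow $e_3$ to be spliced into $C_0$, handling separately the cases where $e_3$ is a chord of $C_0$, where only one endpoint of $e_3$ lies on $C_0$, or where neither does. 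In each formulation the non-cut hypothesis is invoked precisely at the point where the final obstruction is ruled out.
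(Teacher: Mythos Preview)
The paper does not prove this lemma: it attributes the statement to Lov\'asz and refers the reader to Aldred--Holton--Thomassen for a proof, so there is no in-paper argument to compare your attempt against.

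Your subdivide-and-apply-Watkins--Mesner plan is a standard and viable route to the result. One sentence is misstated, though: you write that ``a $2$-vertex separator of $G'$ that contains some $x_i$ must equal $\{u_i,v_i\}$,'' which is self-contradictory since $x_i\notin\{u_i,v_i\}$. What actually happens is that the $2$-separators appearing in the Watkins--Mesner obstruction separate each $x_i$ from the other two and therefore contain none of the $x_i$; hence they lie entirely in $V(G)$. The relevant dichotomy is then whether such a separator $S$ equals $\{u_i,v_i\}$ for some $i$ (in which case $x_i$ is an isolated component of $G'-S$, which on the $G$ side corresponds to cutting the single edge $e_i$) or not (in which case $S$ would already be a $2$-cut of $G$, contradicting $3$-connectivity). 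To make the pullback rigorous you need the precise form of Watkins--Mesner---three $2$-cuts sharing a common vertex, each separating one $x_i$ from the other two---because the shared-vertex feature is exactly what forces, after suppressing the $x_i$'s, the three edges $e_1,e_2,e_3$ to constitute an edge cut of $G$. With that stated carefully your outline goes through; your alternative Menger-and-splice sketch is also workable but, as you anticipate, hides a substantial case analysis.
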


We now use Lemmas~\ref{lem-submod} and \ref{lem-lovasz} to prove the following result about
cycles in cubic graphs.

\begin{lem}\label{lem-3-edge}
 Let $G$ be a 3-connected cubic graph, $u\in V(G)$, $e\in E(G)$ with
 $u\notin V(e)$, $N(u)=\{v_1,v_2,v_3\}$, and
 $N(v_i)=\{u,z_i,w_i\}$ for $i=1,2$. Suppose $|\partial_G(X)|\geq 4$ for any
 $X\subseteq G$ with $|V(X)|\geq 2$, $V(e)\not\subseteq V(X)$, $u,v_3\notin V(X)$ and
 $|V(X)\cap\{v_1,v_2\}|=1$. Then for some $k\in\{1,2\}$, there is
 a cycle in $G\ominus uv_k$ containing  $\{e,v_{3-k}v_3, z_kw_k\}$.
\end{lem}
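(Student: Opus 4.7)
The plan is to apply Lemma~\ref{lem-lovasz} to the cubic graph $H_k := G\ominus uv_k$ with the three edges $\{e, v_{3-k}v_3, z_kw_k\}$, for a suitable $k\in\{1,2\}$. This requires showing, for some $k$, that (i) $H_k$ is $3$-connected and (ii) $\{e, v_{3-k}v_3, z_kw_k\}$ is not an edge cut of $H_k$. The main technical device is a lifting correspondence: any subset $Y\subseteq V(H_k)=V(G)\setminus\{u,v_k\}$ extends to one of four subsets of $V(G)$ by adding $u$ and/or $v_k$, and the resulting change in the $G$-boundary is governed by $a:=|Y\cap\{v_{3-k},v_3\}|$ and $b:=|Y\cap\{z_k,w_k\}|$; adding $u$ changes $|\partial_G|$ by $3-2a$, and adding $v_k$ changes it by $3-2b$.

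For (ii), if $\partial_{H_k}(Y)=\{e, v_{3-k}v_3, z_kw_k\}$, then each cut edge splits its endpoints across $Y$, giving four configurations up to complementing $Y$. In each one, a lift $X$ of $Y$ (either $X=Y$ or $X=Y\cup\{u,v_k\}$) is chosen so that $|\partial_G(X)|=3$; then either $X$ or its complement satisfies all the hypothesis conditions ($u,v_3$ absent, exactly one of $v_1,v_2$ present, at least two vertices, $V(e)$ not contained), forcing $|\partial_G|\geq 4$, a contradiction.

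For (i), suppose both $H_1$ and $H_2$ admit a $2$-edge cut. The lifting correspondence together with the $3$-edge-connectedness of $G$ forces each such $2$-cut in $H_k$ to lift to a $3$-cut $A_k$ of $G$ satisfying $\{u,v_{3-k},v_3\}\subseteq A_k$, $\{v_k,z_k,w_k\}\cup V(e)\subseteq V(G)\setminus A_k$, and $\partial_G(A_k)=\{uv_k, f_1^k, f_2^k\}$. Since $A_1$ and $A_2^c$ share $v_2$ while their union omits $v_1$, Lemma~\ref{lem-submod} gives $|\partial_G(A_1\cup A_2^c)|=3$, and submodularity yields $|\partial_G(A_1\cap A_2^c)|\leq 3$. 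The set $A_1\cap A_2^c$ contains $v_2$, does not contain $u,v_1,v_3$, and is disjoint from $V(e)$. If $|V(A_1\cap A_2^c)|\geq 2$, the hypothesis forces $|\partial_G|\geq 4$, a contradiction. In the exceptional subcase $A_1\cap A_2^c=\{v_2\}$, the symmetric argument is applied to $A_2\cap A_1^c$: either the hypothesis again contradicts, or also $A_2\cap A_1^c=\{v_1\}$, in which case the cubic structure forces $\{v_1z_1, v_1w_1, v_2z_2, v_2w_2\}\subseteq\partial_G(A_1\cup A_2)$, contradicting $|\partial_G(A_1\cup A_2)|=3$ given by Lemma~\ref{lem-submod} applied to $A_1,A_2$.

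The main obstacle is the case analysis for (i), and in particular the exceptional subcase where the hypothesis does not apply directly and a structural count in the cubic graph is required instead. Once (i) and (ii) hold, Lemma~\ref{lem-lovasz} yields the desired cycle in $H_k$ containing $\{e, v_{3-k}v_3, z_kw_k\}$.
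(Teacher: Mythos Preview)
Your argument for (i) is correct, though it takes the complementary route to the paper's: the paper lifts a $2$-cut of $H_i$ to the side $X_i\supseteq\{v_i,z_i,w_i\}$, observes (via the hypothesis) that $V(e)\subseteq X_1\cap X_2$, applies Lemma~\ref{lem-submod}, and notes $|\partial_G(X_1\cup X_2\cup\{u\})|=2$. Your version lifts to the $u$-side and needs the extra ``exceptional subcase'' analysis, but it works.

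Your argument for (ii), however, has a genuine gap. After complementing so that $v_{3-k}\in Y$ and $v_3\notin Y$, the lift $X=Y$ satisfies $|\partial_G(X)|=3$, $u,v_3\notin X$, $|X\cap\{v_1,v_2\}|=1$, and $V(e)\not\subseteq X$; but the remaining hypothesis condition $|V(X)|\ge 2$ can fail. Indeed, if $|Y|=1$ then $Y=\{v_{3-k}\}$, and since the three edges of $H_k$ incident with $v_{3-k}$ must be exactly $\{e,\,v_{3-k}v_3,\,z_kw_k\}$, this forces $v_1v_2\in E(G)$ and $e$ incident with $v_{3-k}$ (and $e\neq v_1v_2$). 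In this situation neither $Y$, nor $V(G)\setminus Y$, nor $Y\cup\{u,v_k\}$, nor its complement meets all the hypothesis conditions (each contains $u$ or $v_3$, or has only one vertex), so the hypothesis simply does not apply. Your ``four configurations'' classification does not rule this out.

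This is precisely the case the paper isolates and handles by a different idea: when $v_1v_2\in E(G)$, the graphs $H_1$ and $H_2$ are isomorphic (both arise from $G$ by contracting the triangle $uv_1v_2u$), so $H_2$ is also $3$-connected; rerunning the same analysis for $k=2$ then forces $e$ to be incident with $v_k$ as well, so $e$ is a second edge between $v_1$ and $v_2$, whence $|\partial_G(\{u,v_1,v_2\})|\le 1$, contradicting $3$-connectedness. You need an argument of this kind to close the gap; the lifting-plus-hypothesis mechanism alone is not enough for (ii).
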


\begin{proof}
It should be noted that, for $i\in \{1,2\}$, if $e=v_iw_i$ or $e=v_iz_i$ then $e=z_iw_i$ in $G\ominus uv_i$.
Let $G_i=G\ominus uv_i$ for $i=1,2$. First, suppose neither $G_1$ nor
$G_2$ is 3-connected. Then, since $G$ is 3-connected,
there exists $X_i'\subseteq G_i$ such that $|\partial_{G_i}(X_i')|=2$,
$w_i,z_i\in X_i'$, and
$v_{3-i},v_3\notin X_i'$. Set $X_i=V(X_i')\cup \{v_i\}$. Then $|X_i|\geq 2$
and $|\partial_G(X_i)|=3$. By the assumption of this lemma,
$e\in E(G[X_i])$ for $i=1,2$;  and hence $X_1\cap
X_2\neq\emptyset$.
By Lemma \ref{lem-submod}, $|\partial_G(X_1\cup X_2)|=3$ and, as a consequence,
$|\partial_G(X_1\cup X_2\cup\{u\})|=2$, a contradiction.

Thus, we may assume without loss of generality that $G_1$ is 3-connected. If $\{e,v_2v_3,z_1w_1\}$
is not an edge cut in $G_1$ or $e=z_1w_1$ in $G_1$ then, by Lemma \ref{lem-lovasz}, there is a
cycle in $G_1$ containing  $\{e,v_2v_3,z_1w_1\}$. So we may assume
that in $G_1$, $e\ne z_1w_1$ and $\{e,v_2v_3,z_1w_1\}$ is an edge cut.
Let $A$ be a component of $G_1- \{e,v_2v_3,z_1w_1\}$ such that $v_3\notin V(A)$. Then
$|\partial_G(A)|=3$, $e\notin E(A)$, $u,v_1,v_3\notin V(A)$, and  $v_2\in
V(A)$. So by the assumption of this lemma, $V(A)=\{v_2\}$. Hence $v_1v_2\in
E(G)$, $e\neq v_1v_2$, and $e$
is incident with $v_2$. However, in this case, $G_2$ and $G_1$ are
isomorphic, as both may be obtained from $G$ by contracting the triangle
$uv_1v_2u$. Hence, $G_2$ is also
3-connected; so  the same argument above shows that $e$ is incident with
$v_1$. Therefore, since $e\neq v_1v_2$, $e$ and $v_1v_2$ are multiple edges between $v_1$ and $v_2$.
However $|\partial_G(\{v_1,v_2,u\})|=1$, a contradiction to the assumption that $G$ is 3-connected.
\end{proof}

\medskip

We also need a lemma concerning properties of the function
$x^r$, which will be used to bound the length of a cycle obtained by merging several cycles.
The parameters in this lemma will represent weights of subgraphs in our proof of
Theorem~\ref{thm-main}.


\begin{lem}\label{lem-inequality}
 Let $t,w,x,y,z\in \mathbb{Z}^+$, let $r=0.8$ and $c=1/(8^r-6^r)\approx 0.922$.
Then the following statements hold.
\begin{itemize}
\item[$(i)$] If $x\geq 8.956z$ and $y\geq 1.036z$ then $x^r+y^r\geq (x+y+z)^r$.
\item[$(ii)$] If $x\leq 10.174y$ then $cx^r+y^r\geq(x+y)^r$.
\item[$(iii)$] If $0.5y\leq x\leq 8.884y$ then $x^r+y^r\geq (1+1/10.174)^r(x+y)^r$.
\item[$(iv)$] If $z<1.98(t+w+x+y)$ and
  $0<t\leq2.072\cdot\min\{w/1.036, x,y,z/5.884\}$ then
  $w^r+x^r+y^r+cz^r\geq(t+w+x+y+z)^r$.
\item[$(v)$] If $w\leq \min\{x,y,z\}$ then $cx^r+y^r+z^r\geq c(w+x+y+z)^r$.
\item[$(vi)$] If $x\geq 6z$ and $y\geq z$ then $cx^r+y^r\geq c(x+y+z)^r$.
\end{itemize}
\end{lem}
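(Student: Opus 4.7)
The plan is to treat all six parts as elementary analytic inequalities about the concave power function $f(x)=x^r$ with $r=0.8$ and the designer constant $c=1/(8^r-6^r)\approx 0.922$. The unified strategy for every part is to exploit the $r$-homogeneity of $f$ to normalize one variable to $1$, reducing the statement to a one- or two-variable inequality on a compact region; then to locate the extremum by computing signs of partial derivatives; and finally to verify the boundary case by a direct numerical check, where by design the check collapses to $\myequation$ (for (i)) or to the defining identity $c(8^r-6^r)=1$ (for (vi) and its relatives).

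To illustrate the template, for (i) I would normalize $z=1$ and set $g(x,y)=x^r+y^r-(x+y+1)^r$. Since $r<1$, both $g_x=r\bigl(x^{r-1}-(x+y+1)^{r-1}\bigr)$ and $g_y=r\bigl(y^{r-1}-(x+y+1)^{r-1}\bigr)$ are strictly positive on $\{x\ge 8.956,\,y\ge 1.036\}$, so $g$ attains its minimum on this region at the corner, where $\myequation$ supplies equality. Parts (ii), (iii), (v), and (vi) follow the same template: (ii) reduces, after $y=1$, to showing that $h(x):=cx^r+1-(x+1)^r\ge 0$ on $[0,10.174]$, where $h(0)=h(10.174)=0$ by construction of the constants with a single sign change of $h'$ in between; (iii) reduces to minimizing $\phi(x):=(x^r+1)/(x+1)^r$ over $[0.5,8.884]$; (vi) is the $c$-weighted analogue of (i) whose boundary inequality is exactly $c(8^r-6^r)=1$; and (v) reduces, after normalizing $w=1$, to a three-variable problem in $(x,y,z)\in[1,\infty)^3$ that has no interior critical point, so the minimum sits on the face where two of $x,y,z$ equal $1$, reducing to a single numerical check against the symmetric corner $(1,1,1)$.

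The main obstacle is (iv), which has five free variables and must mix the ``additive slack'' behavior of (i) with the ``$c$-absorption'' behavior of (ii). The four bounds on $t$ translate into $w\ge t/2$, $x\ge t/2.072$, $y\ge t/2.072$, $z\ge 2.84\,t$, while $z<1.98(t+w+x+y)$ keeps $z$ comparable with the remaining mass. My plan is to split $t=t_w+t_x+t_y+t_z$ in proportions tied to these four lower bounds, apply (i) three times to absorb $t_w,t_x,t_y$ into pairings within $w^r+x^r+y^r$, and apply (ii) to absorb $cz^r$ together with $t_z$ against a fourth pairing; the hypothesis $z<1.98(t+w+x+y)$ enters precisely to verify the $10.174$-ratio condition required by (ii). Finding proportions that make all four sub-inequalities simultaneously valid is the delicate bookkeeping step, after which summing them yields (iv).
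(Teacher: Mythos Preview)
Your template for parts (i), (ii), (iii), (v), (vi) matches the paper's: exploit homogeneity, establish monotonicity from the signs of the partial derivatives, then do a single boundary numerical check. Two small corrections: in (ii), $h(10.174)$ is a small positive number, not exactly zero --- the constant $10.174$ is a rounded value chosen to leave slack, not the exact root; and in (v), the function is increasing separately in each of $x,y,z$, so the minimum on $[1,\infty)^3$ is directly at the corner $(1,1,1)$, not first on a two-dimensional face.

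Where you diverge from the paper is (iv), which you flag as the main obstacle and propose to attack by splitting $t=t_w+t_x+t_y+t_z$ and chaining three applications of (i) with one of (ii). The paper instead handles (iv) by the \emph{same} monotonicity template as every other part, and this is where you have a genuine gap in understanding the role of the hypothesis. With $t>0$ fixed, set $f(w,x,y,z)=w^r+x^r+y^r+cz^r-(t+w+x+y+z)^r$; exactly as in (i), $f$ is increasing in $w,x,y$. The key observation you miss is that
\[
f_z=\frac{cr}{z^{1-r}}-\frac{r}{(t+w+x+y+z)^{1-r}}>0
\quad\Longleftrightarrow\quad
z<\frac{c^{1/(1-r)}}{1-c^{1/(1-r)}}\,(t+w+x+y)\approx 1.983\,(t+w+x+y),
\]
so the hypothesis $z<1.98(t+w+x+y)$ is there precisely to force monotonicity in $z$ as well, not to feed the $10.174$-ratio condition of (ii). With all four partials positive, one simply evaluates $f$ at the corner $(w,x,y,z)=(1.036t/2.072,\,t/2.072,\,t/2.072,\,5.884t/2.072)$ and checks numerically that $1.036^r+2+c\cdot 5.884^r-10.992^r>0$.

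Your decomposition plan may be salvageable, but as written it is not coherent: ``apply (i) three times to absorb $t_w,t_x,t_y$ into pairings within $w^r+x^r+y^r$'' does not parse, since each application of (i) consumes a \emph{pair} of power terms, and three terms do not furnish three independent pairs. If you intend a sequential scheme (combine $w^r+x^r$, then the result with $y^r$, then with $cz^r$ via (ii)), the numerical hypotheses of (i) must be re-verified at each stage against the running partial sum, and this bookkeeping is considerably more delicate than the one-step monotonicity argument the paper uses.
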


\begin{proof}
 Clearly, (i) holds when $z=0$. So we may assume that $z> 0$. Then $x>0$ and
 $y>0$. Let $f(x,y,z):=x^r+y^r-(x+y+z)^r$.
Then the partial derivative $$f_x(x,y,z)=\frac{r}{x^{1-r}}-\frac{r}{(x+y+z)^{1-r}}>0.$$
So $f(x,y,z)$ is increasing with respect to $x$. Similarly, we can show that $f(x,y,z)$ is increasing with
respect to $y$. Hence $$f(x,y,z)\geq f(8.956z,1.036z,z)=z^r(8.956^r+1.036^r-10.992^r)\approx2.918\times 10^{-5}z^r> 0, $$
and (i) holds.

To prove (ii), let $f(x,y):=cx^r+y^r-(x+y)^r$. We may assume $x>0$, as (ii) holds when $x=0$.
Note that $$f_x(x,y)=\frac{cr}{x^{1-r}}-\frac{r}{(x+y)^{1-r}}.$$
So $f_x(x,y)\ge 0$ if and only if $\frac{c^{1/(1-r)}}x\ge  \frac1{x+y}$; and hence
$f_x(x,y)\ge 0$ if and only if $x\le  \alpha y$,
where $\alpha:=\frac{c^{1/(1-r)}}{1-c^{1/(1-r)}}\approx1.983$. Thus, if $x\leq \alpha y$
then $f(x,y)$ is non-decreasing with respect to $x$
and  $f(x,y)\geq f(0,y)=0$. If $\alpha y\leq x\leq 10.174y$
then $f(x,y)$ is decreasing with respect to $x$; so
$$f(x,y)\geq f(10.174y,y)=y^r(10.174^rc+1-11.174^r)\geq 0,$$
and (ii) holds.

For (iii), let $\beta:=(1+1/10.174)^r$ and
$f(x,y):=x^r+y^r-\beta(x+y)^r$. If $y=0$ then $x=0$, and (iii) holds. So we
may assume $y>0$. Hence
$$f_y(x,y)=\frac{r}{y^{1-r}}-\frac{\beta r}{(x+y)^{1-r}}.$$
If $f_y(x,y)\leq 0$ then $1/y<\beta^{1/(1-r)}/(x+y)\approx 1.455/(x+y)<1.5/(x+y)$;
so $x<0.5y$. Thus,  if $x\ge 0.5y$ then $f_y(x,y)> 0$ and
$f(x,y)$ is increasing with respect to $y$; hence, $$f(x,y)\geq f(x,x/8.884)
=\left(\frac{x}{8.884}\right)^r(8.884^r+1-9.884^r\beta)\approx0.0018\left(\frac{x}{8.884}\right)^r\geq0.$$
So (iii) holds.

For (iv), fix $t>0$ and let $f(w,x,y,z):=w^r+x^r+y^r+cz^r-(t+w+x+y+z)^r$. As in the argument for (i),
we can easily show that $f(w,x,y,z)$ is increasing with respect to $w,x$ and $y$. For $z$,
$$f_z(w,x,y,z)=\frac{cr}{z^{1-r}}-\frac{r}{(t+w+x+y+z)^{1-r}}.$$
So $f'_z(w,x,y,z)\le 0$ if and only if $c/z^{1-r}\le 1/(t+w+x+y+z)^{1-r}$ if and only if  $d/(1-d)\le z/(t+w+x+y)$, where $d=c^{\frac{1}{1-r}}$.
Note that $d/(1-d)>1.98$; so if $z<1.98(t+w+x+y+z)$ then $f_x(w,x,y,z)>0$ and,
hence,
\begin{align*}
f(w,x,y,z)&\geq f(1.036t/2.072, t/2.072, t/2.072, 5.884t/2.072)\\
&=(t/2.072)^r(1.036^r+2+5.884^rc-10.992^r)\\
&\approx 0.0275(t/2.072)^r\\
&\geq 0.
\end{align*}
Therefore, we have (iv).

To prove (v), consider $f(w,x,y,z):=cx^r+y^r+z^r-c(w+x+y+z)^r$. Clearly (v)
holds if $w=0$. So  assume $w>0$; hence  $x>0, y>0$ and $z>0$. It is easy to check that
 $f$ is increasing with respect to $x, y, z$.
Hence $f(w,x,y,z)\ge f(w,w,w,w)=(c+2-4^rc)w^r\approx 0.128w^r\ge 0$. This implies (v).

For (vi), we may assume $z>0$ as (vi) holds when $z=0$. Then  $x>0$ and
$y>0$. Let $f(x,y,z):=cx^r+y^r-c(x+y+z)^r$ which is increasing with respect to $x, y$. So
$f(x,y,z)\ge f(6z,z,z)=(6^rc+1-8^rc)z^r=0$ (by the definition of $c$), and (vi) holds.
\end{proof}

As mentioned in Section 1, we will merge  small cycles into a large one. For this, we need the following lemma.

\begin{lem}\label{lem-merge}
Let $n\geq 4$ be an integer and $r,G,e,f,w$ be defined as in Theorem \ref{thm-main}. Assume that Theorem \ref{thm-main}
holds for graphs of order less than $n$. Let $X_1,\ldots, X_s\subseteq V(G)-(V(e)\cup V(f))$ be pairwise disjoint non-trivial sets (i.e. $|X_i|\ge 2$)
and $G'=G/(X_1,\ldots, X_s)$ be cubic. For
$i=1,\ldots, s$, let $v_i$ denote the vertex obtained by contracting $X_i$. Assign weight 0 to all $v_i$ and, for all other vertices of $G'$, keep their weights from $G$.
Let $e_1,\ldots, e_t\in E(G')$. If $G'\ominus e_1 \ominus \ldots \ominus e_t$ has a
cycle $C'$ such that $\partial_G(X_i)\cap E(C')\neq\emptyset$ for $i\in I\subseteq \{1,\ldots, k\}$
then $G$ has a cycle $C$ such that $E(C')\subseteq E(C)$ and $w(C)\geq w(C')+\sum_{i\in I} w(X_i)^r$.
\end{lem}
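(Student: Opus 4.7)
The plan is to lift the given cycle $C'$ in $G'':=G'\ominus e_1\ominus\cdots\ominus e_t$ back to a cycle $C$ in $G$ in two stages. First, each edge of $C'$ which is a merged edge produced by an $\ominus$ operation corresponds to a path of length at least two in $G'$ through the degree-two vertices that were suppressed; reinstating those suppressed vertices yields a cycle $\tilde{C}$ in $G'$ with $E(C')\subseteq E(\tilde{C})$ and $w(\tilde{C})\ge w(C')$, since reinstated vertices carry nonnegative weight. Second, for each $i\in I$, the cycle $\tilde{C}$ visits $v_i$ exactly once and uses precisely two of the three edges of $\partial_G(X_i)$; write their endpoints in $X_i$ as $q^i_{j_1}$ and $q^i_{j_2}$. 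I will then replace the transit through $v_i$ by a path $P_i\subseteq G[X_i]$ from $q^i_{j_1}$ to $q^i_{j_2}$ of weight at least $w(X_i)^r$, producing $C$.

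To construct $P_i$, I form $\tilde{H}_i := G/(V(G)\setminus X_i)$, the cubic graph obtained by contracting everything outside $X_i$ to a single vertex $z_i$, which I assign weight $0$. By the inductive hypothesis (Theorem~\ref{thm-main}(a), applicable since the edges $e^\ast := z_iq^i_{j_1}$ and $f^\ast := z_iq^i_{j_2}$ are adjacent at $z_i$), I obtain a cycle $D_i$ in $\tilde{H}_i$ containing both $e^\ast$ and $f^\ast$ with $w(D_i)\ge w(\tilde{H}_i)^r = w(X_i)^r$, and then $P_i := D_i - z_i$ is the required path in $G[X_i]$. The basic hypotheses are immediate: $\tilde{H}_i$ is cubic since $|\partial_G(X_i)|=3$, is 2-connected because $G$ is, and has order $|X_i|+1\le |V(G)|-|V(e)\cup V(f)|+1<n$.

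The main obstacle is verifying that every 2-edge cut of $\tilde{H}_i$ separates $e^\ast$ from $f^\ast$. Suppose for contradiction that $\{a,b\}$ is a 2-edge cut with $e^\ast$ and $f^\ast$ lying on the same side, and let $B$ denote the component of $\tilde{H}_i-\{a,b\}$ not containing $z_i$; then $B\subseteq X_i$ and $q^i_{j_1},q^i_{j_2}\notin B$. The edges of $\partial_{\tilde{H}_i}(B)$ split into internal edges of $X_i$ and boundary edges of $X_i$ with outer end $z_i$, and each corresponds bijectively to an edge of $\partial_G(B)$, giving $|\partial_G(B)|=|\partial_{\tilde{H}_i}(B)|=2$; hence $B$ witnesses a 2-edge cut of $G$. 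But $V(e)\cup V(f)\subseteq V(G)\setminus X_i\subseteq V(G)\setminus B$ places both $e$ and $f$ on the same side of that cut, contradicting the standing hypothesis that every 2-edge cut of $G$ separates $e$ from $f$. Assembling $C$ from $\tilde{C}$ by performing the substitution $v_i\mapsto P_i$ for each $i\in I$ then yields a cycle in $G$ with $E(C')\subseteq E(C)$ and $w(C) = w(\tilde{C}) + \sum_{i\in I}w(P_i) \ge w(C') + \sum_{i\in I}w(X_i)^r$.
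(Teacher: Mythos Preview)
Your proof is correct and follows essentially the same approach as the paper: form $\tilde H_i=G/(V(G)\setminus X_i)$, verify via the 2-edge-cut hypothesis on $G$ that it meets the conditions of Theorem~\ref{thm-main}, apply the inductive hypothesis to route a cycle through the two boundary edges used by $C'$, and splice the resulting paths into $C'$. The only cosmetic differences are that you make the lifting $C'\to\tilde C$ explicit and phrase the verification as ``every 2-edge cut of $\tilde H_i$ separates $e^\ast$ from $f^\ast$'' where the paper notes directly that $\tilde H_i$ is 3-connected (your argument in fact proves this too, since the side $B$ not containing $z_i$ yields a contradiction regardless of where $e^\ast,f^\ast$ lie).
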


\begin{proof}
Since $G'$ is cubic and $X_1,\ldots, X_s$ are pairwise disjoint, $|\partial_G(X_i)|=3$ for $1\leq i\leq s$. Let $\partial_G(X_i)=\{f_i,g_i,h_i\}$ and $G_i=G/(G-V(X_i))$ for $1\leq i\leq s$.
If $G_i$ is not 3-connected then there is an $A\subseteq G[X_i]$ such that $|\partial_{G_i}(A)|\leq 2$. Note that
$|\partial_{G}(A)|=|\partial_{G_i}(A)|$.  Since $G$ is 2-connected, $\partial_G(A)$ is a 2-edge cut in $G$, which does not separate $e$ for $f$
(because $e,f\notin E(G[X_i])$), a contradiction. Hence $G_i$ is a 3-connected cubic graph.
In $G_i$, assign weight 0 to the vertex resulting from contracting $G-V(X_i)$ and let all other vertices inherit their weights from $G$. Then Theorem \ref{thm-main} holds for $G_i$.

Let $I=\{1\leq i\leq k : \partial_G(X_i)\cap E(C')\neq\emptyset\}$. Without loss of generality, let $f_i,g_i\in E(C')$ for each $i\in I$.
Applying Theorem \ref{thm-main} to $G_i$, there is a
cycle $C_i$ in $G_i$ such that $f_i,g_i\in E(C_i)$ and $w(C_i)\geq w(X_i)^r$. Let $C:=G[E(C')\cup \bigcup_{i\in I}E(C_i)]$. Then $C$ is a cycle in
$G$ such that $E(C')\subseteq E(C)$ and $w(C)=w(C')+\sum_{i\in I}w(C_i)\geq w(C')+\sum_{i\in I} w(X_i)^r$.
\end{proof}

\section{Adjacent edges}

In this section we prove Theorem~\ref{thm-main}(a) for graphs of order $n$
under the assumption that Theorem~\ref{thm-main}
holds for graphs of order less than $n$. Recall from the remark following Theorem~\ref{thm-main} that we may assume $w(V(\{e,f\}))=0$.

\begin{lem}\label{lem-main-a}
Let $r=0.8$ and $n\ge 4$ be an integer, and assume that Theorem~\ref{thm-main} holds for graphs of order less than $n$.
Let $G$ be a 2-connected cubic graph of order $n$, let $e,f\in E(G)$ such that $e$ and $f$ are adjacent, and
every 2-edge cut in $G$ separates $e$ from $f$, and let
$w: V(G)\rightarrow \mathbb{Z}^+$ such that $w(V(\{e,f\}))=0$.
 Then there is a cycle $C$ in $G$ such that $e,f\in E(C)$ and $w(C)\geq w(G)^r$.
\end{lem}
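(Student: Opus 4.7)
I argue by strong induction on $n$, using Theorem~\ref{thm-main} for all orders less than $n$. Denote $u=V(e)\cap V(f)$, $e=uv$, $f=uu'$, and let $g=uu''$ be the third edge at $u$; by the reduction noted in the text I may assume $w(V(\{e,f\}))=0$. The proof splits on the edge-connectivity of $G$.

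\textbf{Case 1 (a 2-edge cut exists).} Because $e$ and $f$ share the vertex $u$, any 2-edge cut that separates $e$ from $f$ must contain $e$ or $f$; say $F=\{e,b\}$ is such a cut with sides $A_1\supseteq\{u,u',u''\}$ and $A_2\ni v$. A cubic degree count rules out $|A_i|=1$. Build $G_i$ from $A_i$ by adding a new edge $e_i$ joining the two degree-$2$ vertices of $A_i$ (so $e_1=uy$ and $e_2=vy'$, where $b=yy'$). A short submodular check, exploiting the hypothesis that every 2-edge cut of $G$ separates $e$ from $f$, shows that every 2-edge cut of $G_1$ contains $e_1$ and every 2-edge cut of $G_2$ contains $e_2$; so Theorem~\ref{thm-main} applies inside each piece. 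Invoking part~(a) on $(G_1,f,e_1)$, which are adjacent at $u$, and, with any edge $e_2'$ sharing an endpoint with $e_2$, invoking part~(a) on $(G_2,e_2,e_2')$, produces cycles $C_1,C_2$. Splicing along $e_1,e_2$ via $\{e,b\}$ yields a cycle $C$ through $e,f$ with $w(C)=w(C_1)+w(C_2)\geq(w(G)-w(A_2))^r+w(A_2)^r\geq w(G)^r$, the final step being subadditivity of $x^r$ for $0<r\leq 1$.

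\textbf{Case 2 ($G$ is 3-edge-connected, hence 3-connected).} Apply Lemma~\ref{lem-3-edge} with its distinguished vertex taken to be $v$ and its distinguished edge taken to be $f$; label the neighbors of $v$ as $v_1,v_2,v_3$ with $v_3=u$, so that the merged edge ``$v_{3-k}v_3$'' in $G\ominus vv_k$ lifts back to the path $v_{3-k}$--$v$--$u$, carrying $e=uv$ automatically. If the submodular hypothesis of Lemma~\ref{lem-3-edge} holds, we obtain, for some $k\in\{1,2\}$, a cycle $C'\subseteq G\ominus vv_k$ through $\{f,v_{3-k}v_3,z_kw_k\}$; lifting gives a cycle $C_0\subseteq G$ through $\{e,f,vv_{3-k},v_kz_k,v_kw_k\}$. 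The skeleton $C_0$ is then extended: each subgraph $X\subseteq G$ attached to $C_0$ by a 3-edge cut receives a long sub-cycle via Theorem~\ref{thm-main} (part~(a) or (b) according to whether the two boundary edges are adjacent in the contracted piece), and Lemma~\ref{lem-merge} splices these sub-cycles into $C_0$. If instead the submodular hypothesis of Lemma~\ref{lem-3-edge} fails, the offending subgraph $X$ yields a 3-edge cut of $G$ with the structure prescribed by that lemma; contract $X$ to obtain a smaller 3-connected cubic graph, apply induction, and glue back with Lemma~\ref{lem-merge}.

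\textbf{Main obstacle.} The serious work is the weight accounting in Case~2. After splicing, $w(C)$ decomposes into a base contribution from $C_0$ plus a sum of $w(X_i)^r$ (or $c\,w(X_i)^r$) terms, and showing $w(C)\geq w(G)^r$ reduces to matching the tuple of piece-weights to one of the inequalities in Lemma~\ref{lem-inequality}. The argument therefore ramifies into sub-cases based on the relative sizes of $w(X_1),\dots,w(X_s)$ and of the $C_0$-skeleton, with each sub-case invoking one of parts~(i)--(iv) of Lemma~\ref{lem-inequality}. The constants $r=0.8$ and $c=1/(8^r-6^r)$ were calibrated so that every weight profile falls into at least one case, but ensuring that some applicable case is always available — especially at borderline splits, where one piece just barely violates a threshold — is the genuinely delicate step, and the choice of which boundary edges to feed into each application of Theorem~\ref{thm-main} must be made with this in mind.
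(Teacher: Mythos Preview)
Your Case~1 is vacuous. Since $e$ and $f$ share the vertex $u$, no 2-edge cut can separate $e$ from $f$ (both edges lie in whichever component contains $u$); the hypothesis ``every 2-edge cut separates $e$ from $f$'' therefore forces $G$ to have no 2-edge cuts at all, i.e.\ $G$ is 3-connected from the outset. The paper observes this in one line. Your attempt to interpret a cut $\{e,b\}$ as ``separating $e$ from $f$'' is not the intended meaning, and in any event the splicing you describe does not produce a cycle through $e$: with $e\in F$ the edge $e$ belongs to neither $C_1$ nor $C_2$, and your merge along $F$ restores $b$ but not $e$.

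The real gap is in Case~2. A single invocation of Lemma~\ref{lem-3-edge} at $v$ followed by ``splice in pieces via Lemma~\ref{lem-merge}'' does not deliver the bound, and the paper does not proceed this way. The paper first reduces (your ``failure'' branch, essentially) so that no nontrivial 3-cut contains $e$ or $f$, then fixes \emph{five} maximal 3-cut pieces $X_1,\dots,X_5$ around $u_1,u_2,u_3$ and builds several distinct candidate cycles $C_{12},C_{52},C_{54},C_5,C_z$ (and, in a second structural case, $C_1,C_2,C_y,C_z$), each coming from a \emph{different} contraction-and-induct step. The point is that no single one of these cycles is guaranteed to be long enough; one must show that the \emph{maximum} of their weights meets $w(G)^r$. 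This is done by encoding the gaps $w(C_\bullet)-w(G)^r$ as seven explicit functions $f_1,\dots,f_7$ of the piece weights $x_1,\dots,x_5,z$ and running a careful elimination using Lemma~\ref{lem-inequality}(i)--(iv) together with monotonicity in each variable. Your proposal names this as ``the serious work'' but supplies none of it: you have one skeleton cycle, not a family, and no mechanism that forces some member of a family to beat the threshold. Lemma~\ref{lem-3-edge} does appear in the paper's argument, but only once, applied to a heavily contracted auxiliary graph $G_z$ to manufacture the last candidate cycle $C_z$; it is not the organizing principle of the proof.
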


\begin{proof}
Since $e$ and $f$ are adjacent, there is no 2-edge cut in $G$ separating $e$ from $f$. So by the assumption of this lemma,
$G$ is 3-connected.

\medskip

{\it Claim 1}. We may assume that no nontrivial 3-edge cut in $G$ contains $e$ or $f$.

For, let $S$ be a nontrivial 3-edge cut in $G$ such that $e\in S$.
Let $A,B$ be the components of $G-S$; then $|A|\ge 2$ and $|B|\ge 2$ (since
$S$ is nontrivial).
So $G/A$ and $G/B$ are  3-connected cubic graphs of order less than $n$. By
assumption, Theorem~\ref{thm-main} holds for both $G/A$ and $G/B$.
Assign weight 0 to the new vertices resulting from contracting $A$ and $B$,
and let all other vertices of  $G/A$ and $G/B$
inherit their weights from $G$.

Without loss of generality, we may assume that $f\in E(G/A)$. Since Theorem~\ref{thm-main} holds for $G/A$,
there is a cycle $C_A$ in $G/A$ such that $e,f\in E(C_A)$ and $w(C_A)\ge  w(G/A)^r$.
By Lemma~\ref{lem-merge}, there is a cycle $C$ in $G$ such that
$e,f\in E(C)$ and
$$w(C)\ge w(G/A)^r+w(B)^r  \ge \big(w(B)+w(A)\big)^r = w(G)^r,$$
completing the proof of Claim 1.
\medskip

Let $e=u_1u_2$ and $f=u_2u_3$, let $e_1,e_2$ be the edges of $G-e$ incident with $u_1$, let
$e_3,e_4$ be the edges of $G-f$ incident with $u_3$, and let $e_5$ be the  edge of $G-\{e,f\}$ incident with $u_2$.
See Fig. \ref{fig2}.

\medskip

{\it Claim 2}. We may assume that $\{e_1,e_2\}\cap \{e_3,e_4\}=\emptyset$.

For, suppose $\{e_1,e_2\}\cap \{e_3,e_4\}\ne \emptyset$ and, without loss of generality, assume $e_2=e_3$.
Let $G':=G/G[\{u_1,u_2,u_3\}]$, assign weight 0 to the vertex resulting from the
contraction of $G[\{u_1,u_2,u_3\}]$, and let the other vertices of $G'$ inherit their weights from $G$.
Then $w(G')=w(G)$ as $w(\{u_1,u_2,u_3\})=w(V(\{e,f\}))=0$. Since $|G'|<n$, it follows from the assumption of this
lemma that Theorem~\ref{thm-main} holds for $G'$. So $G'$ has a cycle $C'$ such that $e_1,e_4\in E(C')$ and $w(C')\ge
w(G')^r$. Now $C:=G[E(C')\cup \{e,f\}]$ is a cycle
in $G$ such that $e,f\in E(C)$ and  $$w(C)\ge w(G')^r=w(G)^r,$$
completing the proof of Claim 2.

\medskip

For $1\leq i\leq 5$,  let $X_i\subseteq G$ be maximal subject to the
following conditions: $e_i\in\partial_G(X_i)$,
$|\partial_G(X_i)|=3$, and $\{u_1,u_2,u_3\}\cap V(X_i)=\emptyset$.
Note that such $X_i$ exists,
as $G$ is cubic and $V(e_i)-\{u_1,u_2,u_3\}$ satisfies these conditions. Moreover,
$G[X_i]$ is connected  as $G_i$ is 3-connected.

\medskip
{\it Claim 3}. For each $1\le i\le 5$, $X_i$ is uniquely defined, and
$(G/X_i)\ominus e_i$ is 3-connected.

First, let $X_i'\subseteq G$ be maximal such that $e_i\in\partial_G(X_i')$,
$|\partial_G(X_i')|=3$, and $\{u_1,u_2,u_3\}\cap V(X_i')=\emptyset$.
By definition, $X_i\cap X_i'\ne \emptyset$ and $X_i\cup X_i'\ne G$. Hence,
since $G$ is 3-connected, it follows from  Lemma \ref{lem-submod} that
$|\partial_G(X_i\cup X_i')|= 3$. By the maximality of $X_i$ and
$X_i'$, we have $X_i=X_i'$.

Now suppose $(G/X_i)\ominus e_i$ is not 3-connected for some $1\le i\le
5$, and let $F$ be a 2-edge cut in $(G/X_i)\ominus e_i$.
Then $F\cup \{e_i\}$ is a 3-edge cut in $G$ and $X_i$ is properly contained in a component of $G-(F\cup \{e_i\})$.
By Claim 1, this component of $G-(F\cup \{e_i\})$
containing $X_i$ is disjoint from $\{u_1,u_2,u_3\}$, contradicting the maximality of $X_i$.

\medskip
{\it Claim 4}.  $X_1\cap X_2=\emptyset$, $X_3\cap X_4=\emptyset$, and $X_5\cap X_i=\emptyset$ for $1\le i\le 4$.

Suppose $X_1\cap X_2\neq\emptyset$. Then as $G$ is 3-connected and $X_1\cup X_2\ne G$, it follows from Lemma \ref{lem-submod}
that $|\partial_G(X_1\cup X_2)|=3$. Hence $X_1=X_2$ by the maximality of $X_1$ and $X_2$.
Therefore, $e_1,e_2\in \partial_G(X_1)$ and, thus, $|\partial_G(X_1\cup\{u_1\})|=2$, contradicting the fact that
$G$ is 3-connected. So $X_1\cap X_2=\emptyset$. Similarly, we have $X_3\cap X_4=\emptyset$.

Now suppose $X_5\cap X_i\neq\emptyset$ for some $1\le i\le 4$. By symmetry,
we may assume $i=1$. Then by Lemma \ref{lem-submod} and by the maximality of $X_1$ and $X_5$,  we can show
that $X_1=X_5$ and $e_1,e_5\in \partial_G(X_1)$.
By Claim 2, the 3-edge cut
$\partial_G(X_1\cup \{u_1,u_2\})$ is nontrivial; but it contains $f$,
contradicting Claim 1.
This completes the proof of Claim 4.

\medskip

{\it Claim 5.} $[X_1,X_2]=\emptyset$ and $[X_3, X_4]=\emptyset$.

If $[X_1,X_2]\neq\emptyset$ then
$\partial_G(X_1\cup X_2\cup\{u_1\})$ is a nontrivial 3-edge cut containing
$e$, a contradiction to Claim 1. Hence $[X_1,X_2]=\emptyset$. Similarly,
$[X_3,X_4]=\emptyset$, completing the proof of Claim 5.

\medskip

By Claim 4, we have two cases to consider: $(X_1\cup X_2)\cap (X_3\cup
X_4)=\emptyset$ and $(X_1\cup X_2)\cap (X_3\cup X_4)\ne \emptyset$.
 Note that for distinct $X_i$ and $X_j$, $|[X_i,X_j]|\le 1$ as $G$ is 3-connected. However, we will see that  it is possible to have  $|[X_i,X_j]|=1$.

\medskip

{\it Case 1.} $(X_1\cup X_2)\cap (X_3\cup X_4)=\emptyset$.

In this case, we have from Claim 4 that $X_i\cap X_j=\emptyset$ for $1\le i<j\le 5$.
Let $x_i:=w(X_i)$ and $\partial_G(X_i)=\{e_i,e_{i1},e_{i2}\}$ for $1\le i\le 5$ (see Fig. \ref{fig2}), and let
$Z:=G-\bigcup_{i=1}^5X_i$ and $z:=w(Z)$. Then
$$w(G)=x_1+x_2+x_3+x_4+x_5+z.$$
By symmetry,
we may assume that $x_1=\min\{x_1,x_2,x_3,x_4\}$ and  $x_3\leq x_4$.
We will find several cycles in $G$ and show that one of these
is the desired cycle.

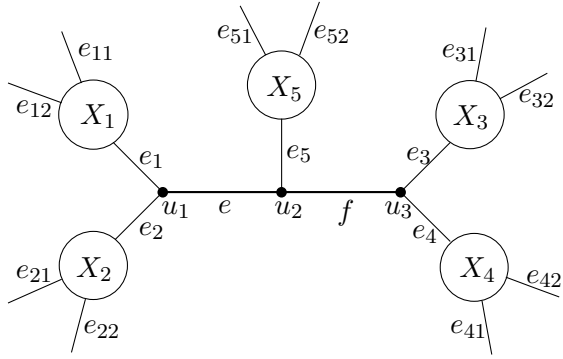
\begin{figure}[ht]\centering
\begin{picture}(208.5,133.5)
\put(148.5,61.5){\circle*{4}}
\put(103.5,61.5){\circle*{4}}
\put(58.5,61.5){\circle*{4}}
\put(115.5,124.5){\makebox(0,0)[tl]{$e_{52}$}}
\put(79,124.5){\makebox(0,0)[tl]{$e_{51}$}}
\put(196.0,31.8){\makebox(0,0)[tl]{$e_{42}$}}
\put(167.3,12.8){\makebox(0,0)[tl]{$e_{41}$}}
\put(193.0,100.3){\makebox(0,0)[tl]{$e_{32}$}}
\put(164.0,117.3){\makebox(0,0)[tl]{$e_{31}$}}
\put(28.5,12.8){\makebox(0,0)[tl]{$e_{22}$}}
\put(3.0,33.8){\makebox(0,0)[tl]{$e_{21}$}}
\put(3.0,96.3){\makebox(0,0)[tl]{$e_{12}$}}
\put(26.3,117.8){\makebox(0,0)[tl]{$e_{11}$}}
\put(97.8,105.3){\makebox(0,0)[tl]{$X_5$}}
\put(170.8,36.8){\makebox(0,0)[tl]{$X_4$}}
\put(168.3,94.5){\makebox(0,0)[tl]{$X_3$}}
\put(25.8,37.0){\makebox(0,0)[tl]{$X_2$}}
\put(27.3,95.0){\makebox(0,0)[tl]{$X_1$}}
\put(105.3,79.3){\makebox(0,0)[tl]{$e_5$}}
\put(152.8,48.8){\makebox(0,0)[tl]{$e_4$}}
\put(150.5,78.5){\makebox(0,0)[tl]{$e_3$}}
\put(49.5,50){\makebox(0,0)[tl]{$e_2$}}
\put(49.5,77.0){\makebox(0,0)[tl]{$e_1$}}
\put(124.5,58.5){\makebox(0,0)[tl]{$f$}}
\put(79.5,58.5){\makebox(0,0)[tl]{$e$}}
\put(142.3,58.0){\makebox(0,0)[tl]{$u_3$}}
\put(100.8,58.0){\makebox(0,0)[tl]{$u_2$}}
\put(58.0,58.0){\makebox(0,0)[tl]{$u_1$}}
\qbezier(188.3,29.3)(198.4,25.5)(208.5,21.8)
\qbezier(179.3,20.3)(181.1,10.1)(183.0,0.0)
\qbezier(186.0,96.8)(195.0,101.6)(204.0,106.5)
\qbezier(177.0,103.5)(178.9,113.6)(180.8,123.8)
\qbezier(110.3,113.3)(114.0,123.4)(117.8,133.5)
\qbezier(97.5,113.3)(92.6,122.6)(87.8,132.0)
\qbezier(29.3,21.0)(26.6,10.9)(24.0,0.8)
\qbezier(20.3,28.5)(10.1,24.0)(0.0,19.5)
\qbezier(27.8,102.8)(24.0,112.5)(20.3,122.3)
\qbezier(148.5,61.5)(157.9,52.1)(167.3,42.8)
\qbezier(148.5,61.5)(157.9,70.9)(167.3,80.3)
\qbezier(103.5,61.5)(103.5,75.4)(103.5,89.3)
\qbezier(58.5,61.5)(49.5,52.5)(40.5,43.5)
\put(176.3,33.0){\circle{25.7}}
\qbezier(20.3,95.3)(10.1,99.0)(0.0,102.8)
\put(174.8,90.8){\circle{25.8}}
\put(103.5,102.0){\circle{25.8}}
\put(32.3,33.8){\circle{25.8}}
\qbezier(58.5,61.5)(49.1,70.9)(39.8,80.3)
{\linethickness{0.8pt}
\qbezier(103.5,61.5)(126.0,61.5)(148.5,61.5)
\qbezier(58.5,61.5)(81.0,61.5)(103.5,61.5)}
\put(32.3,90.8){\circle{26.2}}
\end{picture}
 \caption{\label{fig2} An illustration for Case 1.}
\end{figure}

Let $G_{12}:=G/X_1\ominus e_1/X_2$ (with order of operation from left to right),
let $v_2$ denote the vertex resulting from the contraction of $X_2$, and
let $w(v_2)=0$ and all other vertices of $G_{12}$
inherit their weights from $G$.
Note that
$$w(G_{12})=w(G)-x_1-x_2=x_3+x_4+x_5+z.$$ 
Also note that
by Claim 3, $G/X_1\ominus e_1$ 
is 3-connected; so $G_{12}$ is 3-connected.
Hence by assumption, Theorem~\ref{thm-main} holds for $G_{12}$.

Thus $G_{12}$ has a cycle $C_1$ such that $e=e_2,f\in E(C_1)$ and
$w(C_1)\ge \mycir{w(G_{12})}$. Note that $C_1$ goes through the vertex representing the contraction of $X_2$.
By Lemma \ref{lem-merge}, there is a cycle $C_{12}$ in $G$ such that
$e,f\in E(C_{12})$ and
\begin{equation}\label{eq-aim-x2}
 w(C_{12})\geq w(G_{12})^r+x_2^r=(x_3+x_4+x_5+z)^r+x_2^r.
\end{equation}

Let $G_{52}:=G/X_5\ominus e_5/X_2$, assign wight 0 to the vertex resulting
from the contraction of $X_2$, and let all other vertices of $G_{52}$ inherit their weights from $G$.
Similarly, let $G_{54}:=G/X_5\ominus e_5/X_4$,
assign weight 0 to the vertex resulting from the contraction of $X_4$, and
let all other vertices of $G_{54}$ inherit
their weights from $G$. Note that
$$w(G_{52})=w(G)-x_2-x_5=x_1+x_3+x_4+z$$ and
$$w(G_{54})=w(G)-x_4-x_5=x_1+x_2+x_3+z.$$
By an argument similar to that for $G_{12}$, we see that both $G_{52}$ and $G_{54}$ are 3-connected,
and we can find a cycle in $G_{52}$ through $e=f$ and $e_2$ (and the vertex representing the contraction of $X_2$), and a cycle in $G_{54}$ through $e=f$ and $e_4$
(and the vertex representing the contraction of $X_4$).
By Lemma~\ref{lem-merge}, there exist two cycles $C_{52}$ and $C_{54}$ in $G$
such that $e,f\in E(C_{52})\cap E(C_{54})$ and
\begin{align}
 w(C_{52})\geq w(G_{52})^r+x_2^r=(x_1+x_3+x_4+z)^r+x_2^r, \label{eq-aim-x5-1}\\
 w(C_{54})\geq w(G_{54})^r+x_4^r=(x_1+x_2+x_3+z)^r+x_4^r.\label{eq-aim-x5-2}
\end{align}

Let $G_5:=G/X_5\ominus e_5$. Denote by $e',f'$ the edges obtained from
merging $e$ and $f$ and merging $e_{51}$ and $e_{52}$, respectively.
Then $G_5$ is 3-connected (by Claim 3), and
$$w(G_5)=w(G)-x_5=x_1+x_2+x_3+x_4+z.$$
By assumption of this lemma, Theorem~\ref{thm-main} holds for $G_5$; so
$G_5$ has a cycle $C_1$ such that $e',f'\in E(C_1)$
and $w(C_1)\ge cw(G_5)^r$ (here $e'$ and $f'$ are not adjacent).
By Lemma \ref{lem-merge}, there is a cycle $C_5$ in $G$ such that $e,f\in E(C_5)$ and
\begin{equation}\label{eq-aim-x5-3}
 w(C_5)\geq cw(G_5)^r+x_5^r=c(x_1+x_2+x_3+x_4+z)^r+x_5^r.
\end{equation}

Let $H=G/(X_1,X_2,X_3,X_4,X_5)-\{u_1,u_2,u_3\}$ and $S=\{v_1,v_2,v_3,v_4,v_5\}$, with
each $v_i$ representing the contraction of $X_i$.

We  may assume $V(H)\neq S$. For, if $V(H)=S$ then $H\cong C_5$ because $|[X_i,X_j]|\leq 1$ for all $i\neq j$.
Hence, $H$ has an edge  between $\{v_1,v_2\}$ and $\{v_3,v_4\}$.
Without loss of generality, we may assume that $v_1v_3\in E(H)$. Then $(H-v_1v_3)\cup v_1u_1u_2u_3v_3$ is a hamiltonian cycle
in $G/(X_1,X_2,X_3,X_4,X_5)$. By Lemma \ref{lem-merge},
$G$ has a cycle $C$ such that  $e,f\in E(C)$ and
$$w(C)\geq x_1^r+x_2^r+x_3^r+x_4^r+x_5^r\geq (x_1+x_2+x_3+x_4+x_5)^r=w(G)^r,$$
and the assertion of the lemma holds.

Let $G_z$ be the graph obtained from $H$ by suppressing all vertices of degree 2 (i.e., vertices in $S$).
Since $V(H)\neq S$, $G_z$ is cubic. For any $A\subseteq G_z$, let $A^H$ be the subgraph of $H$ obtained from $A$ by
un-suppressing the vertices in $S$.

We claim that $G_z$ is 2-connected. Otherwise, $G_z$ has disjoint induced subgraphs $Z_1, Z_2$ such that
$V(G_z)=V(Z_1)\cup V(Z_2)$ and $|[Z_1,Z_2]|\le 1$. Without loss of generality we may assume that
$|V(Z_1^H)\cap S|\leq2$. Since $G$ is 3-connected,  $|V(Z_1^H)|\ge 3$, $|V(Z_1^H)\cap S|=2$ and $|[Z_1,Z_2]|= 1$. If
we let $V(Z_1^H)\cap S=\{v_k,v_l\}$ and $[Z_1,Z_2]=\{g\}$ then $\{e_k,e_l,g\}$ is a 3-edge cut in $G$ contradicting
the maximality of $X_k$ and $X_l$.

Let $A_1,\ldots, A_t$ denote the minimal induced subgraphs of $G_z$
such that $|\partial_{G_z}(A_i)|=2$.
We claim that if $t>0$ then $|V(A_i^H)\cap S|\geq2$ for all $i$, and thus $t\leq 2$. For, if
there exists some $A_i\subseteq G_2$ with $V(A_i^H)\cap S=\{v_s\}$ and $\partial_H(A_i^H)=\{g_1,g_2\}$ then
$\partial_G(A_i^H)=\{e_s,g_1,g_2\}$ is a 3-edge cut in $G$,
contradicting the maximality of $X_s$.

Therefore, since $t\le 2$, there exist $k\in \{1,2\}$ and $l\in\{3,4\}$
such that $(G_z\oplus h,e_{k1},e_{l1})$
is 3-connected. So Theorem~\ref{thm-main} holds for $G_z\oplus h$. Let the
vertices of $G_z$ inherit their weights from $G$, and let the new vertices
of $G_z\oplus h$ have weight 0. By Claim 5, $e_{11}\neq e_{21}$ and $e_{31}\neq e_{41}$.
Then for any $i\in\{3-k,7-l,5\}$, there is a cycle $D_i$ in $G_z\oplus h$ such that $h,e_{i1}\in E(D_i)$
and $$w(D_i)\geq cw(G_z\oplus h)^r=cz^r.$$
Note that $D_i$ goes through the vertices representing the contractions of $X_i,X_k,X_l$.
By Lemma~\ref{lem-merge}, choosing $i\in\{3-k,7-l,5\}$ to maximize $x_i$, there is a cycle $C_z$ in $G$ such that
$e,f\in E(C_z)$ and
$$w(C_z) \geq  x_k^r+x_l^r+cz^r+\max\{x_{3-k}^r,x_{7-l}^r,x_5^r\}.$$
Since $x_1\leq x_2$ and $x_3\leq x_4$ and because $k\in \{1,2\}$ and $l\in \{3,4\}$, we
have
\begin{equation}\label{eq-aim-z}
 w(C_z)\geq x_1^r+x_3^r+cz^r+\max\{x_2^r,x_4^r,x_5^r\}.
\end{equation}

We now show that
$$\max\{w(C_{12}),w(C_{52}),w(C_{54}),w(C_5),w(C_z)\}\geq w(G)^r=(x_1+x_2+x_3+x_4+x_5+z)^r.$$
In view of \eqref{eq-aim-x2}--\eqref{eq-aim-z}, let
\begin{align*}
f_1(x_1,x_2,x_3,x_4,x_5,z) &:= (x_3+x_4+x_5+z)^r+x_2^r-(x_1+x_2+x_3+x_4+x_5+z)^r,\\
f_2(x_1,x_2,x_3,x_4,x_5,z) &:= (x_1+x_3+x_4+z)^r+x_2^r-(x_1+x_2+x_3+x_4+x_5+z)^r,\\
f_3(x_1,x_2,x_3,x_4,x_5,z) &:= (x_1+x_2+x_3+z)^r+x_4^r-(x_1+x_2+x_3+x_4+x_5+z)^r,\\
f_4(x_1,x_2,x_3,x_4,x_5,z) &:= c(x_1+x_2+x_3+x_4+z)^r + x_5^r-(x_1+x_2+x_3+x_4+x_5+z)^r,\\
f_5(x_1,x_2,x_3,x_4,x_5,z) &:= x_1^r+x_3^r + x_2^r + cz^r -(x_1+x_2+x_3+x_4+x_5+z)^r,\\
f_6(x_1,x_2,x_3,x_4,x_5,z) &:= x_1^r+x_3^r + x_4^r+ cz^r  -(x_1+x_2+x_3+x_4+x_5+z)^r,\\
f_7(x_1,x_2,x_3,x_4,x_5,z) &:= x_1^r+x_3^r  + x_5^r+ cz^r -(x_1+x_2+x_3+x_4+x_5+z)^r.\\
\end{align*}
By \eqref{eq-aim-x2}--\eqref{eq-aim-z}, it suffices to show that for all $x_1,x_2,x_3,x_4,x_5,z\in \mathbb{Z}^+$,
we have $\max\{f_i: 1\le i\le 7\}\geq 0$.

Suppose this is not true.
Then there exist $x_1,x_2,x_3,x_4,x_5,z\in \mathbb{Z}^+$ such that $f_i(x_1,x_2,x_3,$ $x_4,x_5,z)<0$ for $1\le i\le 7$.
Thus $x_1>0$ as otherwise $f_1\geq 0$. So $x_i\ge x_1>0$ for $i=2,3,4$. Also $x_5>0$ as otherwise $f_3\ge 0$.
Hence we may take partial derivatives of $f_i$ (for $1\le i\le 7$) with respect to any variable, and obtain information about
monotonicity of $f_i$.  Note that each $f_i$ is continuous at $x_j=0$ ($1\le j\le 5$) and $z=0$.

Suppose $z\geq 1.98(x_1+x_2+x_3+x_4+x_5)$.  Since $x_i\geq x_1$ for $i=2,3,4$ and $x_5\geq 0$, we have
$z\geq 7.92x_1$; and since $f_1$ is increasing with respect to each of
$z,x_2,x_3,x_4,x_5$,  we have
$f_1\geq f_1(x_1,x_1,x_1,x_1,0,7.92x_1)=(9.92x_1)^r+x_1^r-(11.92x_1)^r\approx0.00775x_1^r\geq 0$, a contradiction.

Hence, $z<1.98(x_1+x_2+x_3+x_4+x_5)$. Then, since  $f_i<0$ for
$i=5,6,7$, it follows from Lemma~\ref{lem-inequality}(iv)  that, for
each permutation  $jkl$ of $\{2,4,5\}$,
$$x_j+x_k>2.072\cdot \min\{x_l/1.036, x_1, x_3, z/5.884\}.$$
If we choose $j,k,l$ so that $x_j\leq x_k\leq x_l$ then, since
 $2.072x_l/1.036=2x_l\ge x_j+x_k$, $x_j+x_k>2.072\cdot\min\{x_1,x_3,z/5.884\}$. Since $x_1\leq x_3$,
\begin{equation}\label{eq-x2x4x_5}
\min\{x_4+x_5,x_5+x_2,x_2+x_4\}>2.072\cdot\min\{x_1,z/5.884\}
\end{equation}
Moreover, since  $f_4<0$, it follows from
Lemma~\ref{lem-inequality}(ii) that
\begin{equation}\label{eq-x_5}
 x_1+x_2+x_3+x_4+z>10.174x_5.
\end{equation}

Suppose $z/5.884\ge x_1$. Then by \eqref{eq-x2x4x_5}, $\min\{x_4+x_5,x_5+x_2,x_2+x_4\}>2.072x_1$. Hence
$x_3+x_4+x_5+z\geq(1+2.072+5.884)x_1=8.956x_1$. If $x_2\geq 1.036x_1$ then by Lemma \ref{lem-inequality}(i),
we have $f_1\geq 0$, a contradiction. So $x_2<1.036x_1$. Then, since
$x_2+x_5>2.072x_1$, $x_5>1.036x_1$.
By \eqref{eq-x_5}, $x_3+x_4+z>10.174x_5-x_1-x_2>8.5x_1$. Since $f_1$ is increasing
with respect to $x_3+x_4+z$, $x_2$ and $x_5$,
$$f_1\geq (8.5x_1+1.036x_1)^r+x_1^r-(2x_1+8.5x_1+1.036x_1)^r=(9.536^r+1-11.536^r)x_1^r\geq0,$$ a contradiction.

Hence, $z/5.884<x_1.$  By  \eqref{eq-x_5}, we have
\begin{equation*}
 f_2>g_2:=(x_1+x_3+x_4+z)^r+x_2^r-(1+1/10.174)^r(x_1+x_2+x_3+x_4+z)^r.
\end{equation*}

Suppose $x_4<x_2$. Then, since $z/5.884<x_1$ and $x_1\le x_3\le x_4$,
we have $x_1+x_3+x_4+z< 8.884 x_2$.  Since $g_2<f_2<0$,
it follows from  Lemma~\ref{lem-inequality}(iii)  that $0.5x_2>x_1+x_3+x_4+z\geq 3x_1$.
Hence $x_2\ge 6x_1$, and so
$f_1\geq f_1(x_1,6x_1,x_1,x_1,0,0)\geq (2x_1)^r+(6x_1)^r-(9x_1)^r\approx0.134x_1^r\geq0,$ a contradiction.

Hence, $x_2\leq x_4$. By \eqref{eq-x_5} we have
\begin{equation*}
 f_3\geq g_3:= (x_1+x_2+x_3+z)^r+x_4^r-(1+1/10.174)^r(x_1+x_2+x_3+x_4+z)^r.
\end{equation*}
Since $z/5.884<x_1$, $x_1\le x_2\le x_4$ and $x_3\le x_4$, we have
$x_1+x_2+x_3+z< 8.884 x_4$. Since $g_3\le f_3<0$, it follows from Lemma~\ref{lem-inequality}(iii)
that  $0.5x_4>x_1+x_2+x_3+z\geq 3x_1$. Hence $x_4\ge 6x_1$.
If $x_2\geq 2x_1$ then $f_1\geq f_1(x_1,2x_1,x_1,6x_1,0,0) =(7x_1)^r+(2x_1)^r-(10x_1)^r\approx 0.175x_1^r\geq0$,
a contradiction. So $x_2<2x_1$. If $x_5\leq 1.5x_1$ then $f_3\geq f_3(x_1,x_1,x_1,6x_1,1.5x_1,0)
=(3x_1)^r+(6x_1)^r-(10.5x_1)^r\approx 0.04x_1^r\geq 0$, a contradiction. So $x_5>1.5x_1$. Then from \eqref{eq-x_5} we deduce
$x_3+x_4+x_5+z\geq 11.174x_5-x_1-x_2>13.76x_1$. So $f_1\geq
(13.76x_1)^r+x_1^r-(15.76x_1)^r\approx0.066x_1^r\geq0$, a contradiction.

\medskip
{\it Case 2.} $(X_1\cup X_2)\cap (X_3\cup X_4)\neq\emptyset$.

Without loss of generality, we may assume that $X_2\cap X_4\neq\emptyset$.
Then by Lemma \ref{lem-submod} and by the maximality of $X_2$ and $X_4$, we have $X_2=X_4$.

We may assume that $X_1\cap X_3=\emptyset$. For, suppose
$X_1\cap X_3\neq\emptyset$. Then $X_1=X_3$ by Lemma \ref{lem-submod} and by the maximality of $X_1$ and $X_3$.
Let $\partial_G(X_1)=\{e_1,e_3,f_1\}$ and $\partial_G(X_2)=\{e_2,e_4,f_2\}$, and let $U:=G[X_1\cup X_2\cup \{u_1,u_2,u_3\}]$.
Then $\partial_G(U)=\{e_5,f_1,f_2\}$. So $G/U$ is 3-connected and
Theorem~\ref{thm-main} holds for $G/U$. Assign weight 0 to the vertex
of $G/U$ resulting from the contraction of $U$, and let all other vertices of $G/U$ inherit their weights from $G$.
Then $G/U$ has a cycle $C_1$ such that $f_1,f_2\in E(C_1)$ and $w(C_1)\ge \mycircle{w(G)-w(U)}$.
Note that $C_1$ goes through the vertices representing the
contractions of $X_1,X_2$. Hence, by Lemma~\ref{lem-merge},
there is a cycle $C$ in $G$ such that $e,f\in E(C)$ and
$w(C)\ge \mycir{w(X_1)}+\mycir{w(X_2)}+\mycircle{w(G)-w(U)}\ge \mycir{w(G)}.$

Relabel $X_2=X_4$ as $Y_1$ and let $\partial_G(Y_1)=\{e_2,e_4,e_6\}$.
Let $Y_2\subseteq G-Y_1-\{u_1,u_2,u_3\}$ be maximal such that $e_6\in \partial_G(Y_2)$ and $|\partial_G(Y_2)|=3$.
Note that $Y_2$ exists (as $G$ is cubic) and $G[Y_2]$ is connected as $G$ is 3-connected. Moreover,
$Y_2$ is uniquely defined by the same argument for Claim 3 (that $X_i$
is uniquely defined).

We claim that $Y_2\cap (X_1\cup X_3)=\emptyset$.
For, suppose $Y_2\cap X_i\ne \emptyset$ for some $i\in \{1,3\}$. Then
by Lemma~\ref{lem-submod} and the maximality of $Y_2$ and $X_i$,
we have $Y_2=X_i$. Thus $[X_i, Y_1]\neq\emptyset$, contradicting Claim 5.

We may assume that $Y_2\cap X_5=\emptyset$.
For, if $Y_2\cap X_5\neq\emptyset$ then $Y_2=X_5$ by Lemma \ref{lem-submod} and the maximality of $X_5$ and $Y_2$.
Let $\partial_G(Y_2)=\{e_5,e_6,f_1\}$, and $U=G[Y_1\cup Y_2\cup\{u_1,u_2,u_3\}]$. Then $\partial_G(U)=\{e_1,e_3,f_1\}$.
So $G/U$ is 3-connected, and Theorem~\ref{thm-main} holds for $G/U$. Assign weight 0 to the vertex of $G/U$ resulting from the
contraction of $U$, and let all other vertices of $G/U$ inherit their weights from $G$.
Now $G/U$ has a cycle $C_1$ such that $e_1,f_1\in E(C_1)$
and $w(C_1)\ge \mycircle{w(G)-w(U)}$.
Note that $C_1$ goes through the vertices representing the
contractions of $Y_1,Y_2$. By Lemma~\ref{lem-merge},
there is a cycle $C$ in $G$ such that $e,f\in E(C)$ and $w(C)\ge
\mycircle{w(G)-w(U)}+\mycir{w(Y_1)}+\mycir{w(Y_2)}\ge
\mycir{w(G)}.$

Thus, $G$ has the structure described in  Fig. \ref{fig3}.
Let $x_i:=w(X_i)$ for $i=1,3,5$, $y_i:=w(Y_i)$ for $i=1,2$,  $Z:=G-(X_1\cup X_3\cup X_5\cup Y_1\cup Y_2)$, and
$z:=w(Z)$. Then $$w(G)=x_1+x_3+x_5+y_1+y_2+z.$$
Let $\partial_G(X_i)=\{e_i,e_{i1},e_{i2}\}$ for $i=1,3,5$,
and let $\partial_G(Y_2)=\{e_6,e_{61},e_{62}\}$. By symmetry, we may assume that $x_1\leq x_3$.
We will produce several cycles in $G$ and show that one of these is the desired cycle.

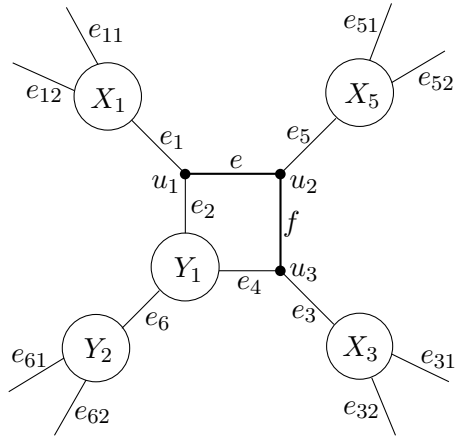
\begin{figure}[ht]\centering
\begin{picture}(166.5,163.5)
\put(102.8,62.3){\circle*{4}}
\put(102.8,99.0){\circle*{4}}
\put(66.8,99.0){\circle*{4}}
\put(24.8,11.0){\makebox(0,0)[tl]{$e_{62}$}}
\put(1.0,32.5){\makebox(0,0)[tl]{$e_{61}$}}
\put(154.8,137.3){\makebox(0,0)[tl]{$e_{52}$}}
\put(126.5,159.0){\makebox(0,0)[tl]{$e_{51}$}}
\put(126.5,12.8){\makebox(0,0)[tl]{$e_{32}$}}
\put(155.8,32.0){\makebox(0,0)[tl]{$e_{31}$}}
\put(6.5,132.5){\makebox(0,0)[tl]{$e_{12}$}}
\put(30.0,155.5){\makebox(0,0)[tl]{$e_{11}$}}
\put(28.5,36.8){\makebox(0,0)[tl]{$Y_2$}}
\put(62.5,67.3){\makebox(0,0)[tl]{$Y_1$}}
\put(127.0,133.3){\makebox(0,0)[tl]{$X_5$}}
\put(126.3,37.5){\makebox(0,0)[tl]{$X_3$}}
\put(31.0,131.5){\makebox(0,0)[tl]{$X_1$}}
\put(51.5,46.5){\makebox(0,0)[tl]{$e_6$}}
\put(105.0,117.0){\makebox(0,0)[tl]{$e_5$}}
\put(86.3,60){\makebox(0,0)[tl]{$e_4$}}
\put(107,49.3){\makebox(0,0)[tl]{$e_3$}}
\put(68.8,88.8){\makebox(0,0)[tl]{$e_2$}}
\put(57.0,114.8){\makebox(0,0)[tl]{$e_1$}}
\put(106.3,65.0){\makebox(0,0)[tl]{$u_3$}}
\put(106.3,98.3){\makebox(0,0)[tl]{$u_2$}}
\put(54.0,98.3){\makebox(0,0)[tl]{$u_1$}}
\put(103.8,85.0){\makebox(0,0)[tl]{$f$}}
\put(83.5,106.0){\makebox(0,0)[tl]{$e$}}
\qbezier(144.8,133.5)(154.9,139.5)(165.0,145.5)
\qbezier(136.5,141.8)(140.6,152.6)(144.8,163.5)
\qbezier(33.8,140.3)(27.8,151.1)(21.8,162.0)
\qbezier(24.8,130.5)(13.1,135.8)(1.5,141.0)
\qbezier(144.8,30.8)(155.6,25.5)(166.5,20.3)
\qbezier(137.3,21.8)(141.8,10.9)(146.3,0.0)
\qbezier(29.3,21.0)(23.3,10.5)(17.3,0.0)
\qbezier(21.0,29.3)(10.5,22.9)(0.0,16.5)
\qbezier(102.8,99.0)(113.3,109.5)(123.8,120.0)
\qbezier(66.8,99.0)(56.6,109.1)(46.5,119.3)
\qbezier(102.8,62.3)(112.9,52.1)(123.0,42.0)
\qbezier(57.0,54.8)(49.9,47.6)(42.8,40.5)
\put(132.8,129.8){\circle{25.6}}
\put(37.5,128.3){\circle{25.5}}
\put(33.0,33.0){\circle{25.5}}
\put(132.8,33.8){\circle{25.0}}
\qbezier(79.5,62.3)(91.1,62.3)(102.8,62.3)
\qbezier(66.8,99.0)(66.8,87.8)(66.8,76.5)
\put(66.8,63.8){\circle{25.7}}
{\linethickness{0.8pt}
\qbezier(102.8,99.0)(102.8,80.6)(102.8,62.3)
\qbezier(66.8,99.0)(84.8,99.0)(102.8,99.0)}
\end{picture}\caption{\label{fig3}An illustration for  Case 2.}
\end{figure}

Let $G_1:=G/X_1\ominus e_1/(G[Y_1\cup \{u_2,u_3\}],Y_2,X_3)$, with the order of operation from left to right.
Assign weight 0 to the three vertices resulting from contracting $G[Y_1\cup \{u_2,u_3\}]$, $Y_2$, and $X_3$
and let all other vertices of $G_1$ inherit their weights from $G$.
Then $w(G_1)=x_5+z$. By Claim 3, $G/X_1\ominus e_1$ is 3-connected. Since
$|\partial_{G/X_1\ominus e_1}(G[Y_1\cup
\{u_2,u_3\}])|=|\partial_{G/X_1\ominus
  e_1}(Y_2)|=|\partial_{G/X_1\ominus e_1}(X_3)|=3$, $G_1$ is 3-connected. Hence, Theorem~\ref{thm-main}
holds for $G_1$; so  $G_1$ has a cycle $D_1$ such that $e_3,e_6\in E(D_1)$ and $w(D_1)\geq w(G_1)^r=(x_5+z)^r$.
Note that $D_1$ goes through the vertices representing the
contractions of $X_3,Y_1,Y_2$. So by Lemma~\ref{lem-merge},
there is a cycle $C_1$ in $G$
such that $e,f\in E(C_1)$ and
\begin{equation}\label{eq-aim2-x}
 w(C_1)\geq y_1^r + y_2^r + x_3^r + (x_5+z)^r.
\end{equation}

Now let $G_2:=G/X_5\ominus e_5/(G[Y_1\cup \{u_1,u_3\}],Y_2,X_3)$. 
Assign weight 0 to the three vertices resulting from contracting $G[Y_1\cup \{u_1,u_3\}]$, $Y_2$, and $X_3$,
and let all other vertices of $G_2$ inherit their weights from $G$. By
Claim 3 and the maximality of $X_5$, $G/X_5\ominus e_5$ is
3-connected; so $G_2$ is 3-connected. Hence,
Theorem~\ref{thm-main} holds for $G_2$ and, thus,  $G_2$ has a cycle $D_1$
such that $e_3,e_6\in E(D_1)$ and $w(D_1)\ge w(G_2)^r=(x_1+z)^r$.
Note $D_1$ goes through the vertices representing the contractions of
$Y_1,Y_2,X_3$. Hence, by Lemma~\ref{lem-merge},
there is a cycle $C_2$ in $G$ such that $e,f\in E(C_2)$ and
\begin{equation}\label{eq-aim2-x5}
 w(C_2)\geq y_1^r + y_2^r + x_3^r + (x_1+z)^r.
\end{equation}

Next, let $G_y:=G/(Y_1,Y_2)\ominus e_6/(X_1,G[\{u_1,u_2,u_3\}],X_3)$. 
Assign weight 0 to the three vertices resulting from the contraction of
$X_1$, $G[\{u_1,u_2,u_3\}]$ and $X_3$ and let all other vertices of
$G_y$ inherent their weights from $G$. Then $w(G_y)=x_5+z$.
If $G/(Y_1,Y_2)\ominus e_6$ has an edge-cut $F$ with $|F|\le 2$ then,
since $G/(Y_1,Y_2)$ is 3-connected, $|F|=2$ and $F':=F\cup \{e_6\}$ is
a 3-edge-cut in $G$; but the component of $G-F'$ containing $Y_2$
contradicts the maximality of $Y_2$. Hence,  $G/(Y_1,Y_2)\ominus e_6$ is 3-connected; so
$G_y$ is 3-connected, and Theorem~\ref{thm-main} holds for $G_y$.
Thus $G_y$ has a cycle $D_y$ such that $e_1,e_3\in E(D_y)$ and $w(D_y)\ge w(G_y)^r=(x_5+z)^r$.
Note that $D_1$ going through $e,f$ and the vertices representing the
contractions of $X_1,X_3$. Hence, by Lemma \ref{lem-merge}, there is a cycle $C_y$ in $G$ such that
$e,f\in E(C_y)$ and
\begin{equation}\label{eq-aim2-y}
 w(C_y)\geq x_1^r+x_3^r+(x_5+z)^r.
\end{equation}

Finally, let $G_z:=G/X_5\ominus e_5/(G[Y_1\cup\{u_1,u_3\}],X_1,X_3,Y_2)$.
Assign weight 0 to the four vertices resulting from the contractions of
$G[Y_1\cup\{u_1,u_3\}]$, $X_1$, $X_3$ and $Y_2$, respectively, and let
other vertices of $G_z$ inherit their weights from $G$. Let
$e_1=u_1v_1$, $e_5=u_2v_2$ and $e_3=u_3v_3$, and let
$N_{G_z}(v_i)=\{u_i,z_i,w_i\}$. By the
maximality of $X_1$ and $X_3$, $G_z$ satisfies the conditions of Lemma~\ref{lem-3-edge}.
So by Lemma~\ref{lem-3-edge}, $G_z\ominus uv_2$ contains a cycle
through $e_{51},v_1v_3,e_{31}$, or $G_z\ominus uv_1$ contains a cycle through $e_{51}, v_3v_2, e_{11}$.
Assume the former. Then $G$ contains a cycle $D_z$ through
$e_1,e,f,e_4,e_6, e_{51}, e_{31}$.
By Lemma \ref{lem-merge}, there is a cycle $C_z$ in $G$ such that $e,f\in E(C_z)$ and
\begin{equation}\label{eq-aim2-z}
w(C_z)\geq x_1^r+x_3^r+x_5^r+y_1^r+y_2^r.
\end{equation}
Similarly, such $C_z$ exists if $G\ominus uv_1$ has a cycle through
$e_{51}, v_3v_2, e_{11}$ (with $e_{51}$ playing the role of $e$).

We now  show that $\max\{w(C_1),w(C_2),w(C_y),w(C_z)\}\geq w(G)^r$ from
which the assertion of the lemma holds.
We may assume $x_1>0$ as otherwise $C_1$ is the desired cycle; hence
$x_3>0$ as $x_3\geq x_1$. Similarly, $x_5>0$, $y_1+y_2>0$ and $z>0$; as
otherwise $C_2$, $C_y$ or $C_z$ is the desired cycle.

It is easy to verify that  $w(C_1)-w(G)^r$, $w(C_2)-w(G)^r$,
$w(C_y)-w(G)^r$ and $w(C_z)-w(G)^r$ are increasing functions with respect to $x_3$.
Therefore, since we assume $x_1\leq x_3$, we may further assume that $x_1=x_3$ and denote it by $x$.
Furthermore, by noting that $y_1^r+y_2^r\geq (y_1+y_2)^r$ and writing $y:=y_1+y_2$, it suffices to show
$\max\{f_1,f_2,f_3,f_4\}\geq 0$ for all $x,x_5,y,z\in \mathbb{Z}^+$, where
\begin{align*}
f_1(x,x_5,y,z) &= x^r+y^r+(x_5+z)^r-(2x+x_5+y+z)^r,\\
f_2(x,x_5,y,z) &= x^r+y^r+(x+z)^r-(2x+x_5+y+z)^r, \\
f_3(x,x_5,y,z) &= 2x^r+(x_5+z)^r-(2x+x_5+y+z)^r, \mbox{ and }\\
f_4(x,x_5,y,z) &= 2x^r+x_5^r+y^r-(2x+x_5+y+z)^r.
\end{align*}

Suppose, for a contradiction, that there exist $x,x_5,y,z\in \mathbb{Z}^+$ such that
$f_i(x,x_5,y,z)<0$ for $i=1,2,3,4$.
Let $t=\min\{x,y,x_5\}$. Then $t>0$. Moreover, $z>t$; otherwise, since $f_4$ is
increasing with respect to $x,y,x_5$ and decreasing with respect to $z$,
$f_4(x,x_5,y,z)\geq f_4(t,t,t,t)=4t^r-(5t)^r\approx0.376t^r\geq0$, a contradiction.

If $t=x$ then, since $f_1$ is increasing with respect to $x_5,y,z$,
$f_1(x,x_5,y,z)\geq f_1(t,t,t,t)=(2+2^r-5^r)t^r\approx0.117t^r>0$,
a contradiction. Similarly, if $t=x_5$ then $f_2(x,x_5,y,z)\ge
f_2(t,t,t,t)\approx 0.117t^r> 0$, and
if $t=y$ then $f_3(x,x_5,y,z)\ge f_3(t,t,t,t)\approx 0.117t^r>0$.
\end{proof}

\section{Nonadjacent edges}

In this section we prove Theorem~\ref{thm-main}(b) for graphs of order $n$ under the assumption that Theorem~\ref{thm-main}
holds for graphs of order less than $n$.

\begin{lem}\label{lem-main-b}
Let $n\ge 4$ be an integer and assume that Theorem~\ref{thm-main} holds for graphs of order less than $n$.
Let $G$ be a 2-connected cubic graph of order $n$, let
$w: V(G)\rightarrow \mathbb{Z}^+$, and let $e,f\in E(G)$ such that
$w(V(\{e,f\}))=0$, $e$ and $f$ are nonadjacent, and
every 2-edge cut in $G$ separates $e$ from $f$.
 Then there is a cycle $C$ in $G$ such that $e,f\in E(C)$ and $w(C)\geq cw(G)^r$, where
$r=0.8$ and $c=1/(8^r-6^r)$.
\end{lem}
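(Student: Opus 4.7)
The plan is to follow the blueprint of Lemma~\ref{lem-main-a}, exploiting the extra factor $c$ on the right-hand side. First I would perform the standard reductions: assume $G$ is 3-connected, and following Claim~1 of Lemma~\ref{lem-main-a}, that no nontrivial 3-edge cut in $G$ contains $e$ or $f$. Concretely, if $S=\{e,s_1,s_2\}$ is such a cut with components $A,B$ of $G-S$ and $f\in E(A)$, then $G/B$ is 3-connected cubic of smaller order with $e,f$ still nonadjacent, and the inductive hypothesis yields a cycle of weight $\geq c\,w(A)^r$ through $e,f$. Lifting via Lemma~\ref{lem-merge} produces a cycle in $G$ of weight $\geq c\,w(A)^r+w(B)^r\geq c(w(A)+w(B))^r=c\,w(G)^r$, the last inequality being immediate from subadditivity of $t\mapsto t^r$ combined with $c<1$.

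Write $e=u_1u_2$ and $f=u_3u_4$, with edges $e_1,e_2$ at $u_1$, $e_3,e_4$ at $u_2$, $e_5,e_6$ at $u_3$ and $e_7,e_8$ at $u_4$ (distinct from $e,f$ but possibly coinciding with one another). For each $i\in\{1,\dots,8\}$ let $X_i\subseteq G$ be the unique maximal subgraph with $e_i\in \partial_G(X_i)$, $|\partial_G(X_i)|=3$, and $V(X_i)\cap \{u_1,u_2,u_3,u_4\}=\emptyset$. The analogues of Claims~3--5 of Lemma~\ref{lem-main-a}, proved via Lemma~\ref{lem-submod} and the reductions above, should show each $X_i$ is uniquely defined, $(G/X_i)\ominus e_i$ is 3-connected, and the pairs $(X_1,X_2),(X_3,X_4),(X_5,X_6),(X_7,X_8)$ are disjoint with no edges between them. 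The case split is then governed by whether any $X_i$ with $i\leq 4$ coincides with an $X_j$ with $j\geq 5$ (a ``tunnel'' between the $e$-side and the $f$-side).

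In each case I would produce several candidate cycles. The basic moves are: (i) contract a judicious subset of the $X_i$'s to obtain a smaller 3-connected cubic graph $G'$, possibly suppressing $e$ or $f$ (when $G'\ominus e$, $G'\ominus f$, or $G'\ominus e\ominus f$ is well-defined and 3-connected), apply the inductive hypothesis on $G'$ to obtain a cycle through prescribed edges, and lift via Lemma~\ref{lem-merge} to pick up the weights $w(X_i)^r$; (ii) use Lov\'asz' Lemma~\ref{lem-lovasz} or Lemma~\ref{lem-3-edge} to thread a cycle through three prescribed edges in a suppressed cubic graph. Setting $x_i=w(X_i)$ and $z=w(G)-\sum_i x_i$, each candidate yields a lower bound of the form ``a sum of $x_i^r$'s, possibly with one $c\,x_j^r$ or $c\,z^r$ term''. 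The task reduces to verifying that the pointwise maximum of these lower bounds dominates $c(x_1+\cdots+x_8+z)^r$; parts (ii), (v) and (vi) of Lemma~\ref{lem-inequality} are calibrated precisely for this sort of check.

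The main obstacle will be the bookkeeping across the case analysis: one must enumerate how the eight $X_i$'s can coincide or abut, and within each configuration build enough candidate cycles so that at least one meets the target. When the various contractions or suppressions fail to be 3-connected, Lemma~\ref{lem-3-edge} provides an alternate suppression that still yields a cycle through three prescribed edges, exactly as at the end of Case~2 of Lemma~\ref{lem-main-a}. The ``tunnel'' case (where an $X_i$ with $i\leq 4$ equals some $X_j$ with $j\geq 5$) should be the most delicate, since it forces one to route the cycle through the shared block. I expect that the extra factor $c\approx 0.922$ on the right-hand side provides enough slack that the inequality verifications, while still computation-heavy, are no worse than those in Lemma~\ref{lem-main-a}.
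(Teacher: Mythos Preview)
Your plan follows the paper's approach closely, but it is missing one reduction that the paper makes and that your later claims genuinely depend on: you must first arrange that \emph{no 3-edge cut of $G$ separates $e$ from $f$}. Without this, the assertion that $(G/X_i)\ominus e_i$ is $3$-connected fails. Concretely, if $F$ is a $2$-edge cut of $(G/X_1)\ominus e_1$, then $F':=F\cup\{e_1\}$ is a $3$-edge cut of $G$; the component $A$ of $G-F'$ containing $X_1$ properly contains $X_1$, and maximality of $X_1$ only yields a contradiction if $A\cap\{u_1,u_2,u_3,u_4\}=\emptyset$. The cases $u_2\in A$ or exactly one of $u_3,u_4$ in $A$ are handled by your ``no nontrivial $3$-cut contains $e$ or $f$'' reduction, but the case $u_3,u_4\in A$, $u_1,u_2\notin A$ (i.e.\ $F'$ separates $e$ from $f$) is not. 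The same missing reduction is also what forces $X_1,X_2,X_3,X_4$ (in your indexing) to be \emph{pairwise} disjoint, not merely disjoint within the pairs $(X_1,X_2)$ and $(X_3,X_4)$: if $X_1\cap X_3\neq\emptyset$ one gets a $3$-cut $\partial_G(X_1\cup\{u_1,u_2\})$ separating $e$ from $f$.

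The paper disposes of this extra reduction by a short pigeonhole argument: if $S=\{f_1,f_2,f_3\}$ separates $e$ from $f$ with sides $A\ni e$ and $B\ni f$, apply the induction hypothesis in $G/B$ to get, for each $i$, a cycle through $e$ and $f_i$ of weight $\ge c\,w(A)^r$, and symmetrically in $G/A$; each such cycle uses exactly two of the $f_j$'s, so among three cycles on each side at least two of the three $2$-subsets of $S$ occur, and some pair matches to give a cycle through $e,f$ of weight $\ge c\,w(A)^r+c\,w(B)^r\ge c\,w(G)^r$. You also need the small reduction that no edge is adjacent to both $e$ and $f$ (handled by a single $\ominus$), so that the eight edges $e_1,\dots,e_8$ are genuinely distinct. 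Once these two reductions are in place, your outline matches the paper's: the ``tunnel'' parameter $q=|\{X_i=X_j:i\le 4<j\}|$ satisfies $q\in\{0,1,2\}$ (the paper rules out $q\ge3$ via two further structural claims), and the candidate cycles in each case are compared using parts (v) and (vi) of Lemma~\ref{lem-inequality}; Lemma~\ref{lem-3-edge} is not actually needed here.
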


\begin{proof}

First, we may assume
\medskip
{\it Claim 1}.  $G$ is 3-connected.

For, suppose that there exists a 2-edge cut, say  $F$, in $G$. Then by assumption, $F$ separates $e$ from $f$ in $G$; so
let $A,B$ denote the components of $G-F$ containing $e,f$, respectively. Let $G_1$ be the graph obtained from $A$ by adding an
edge (call it $f'$) between the vertices in $V(F)\cap V(A)$, and let $G_2$ denote the graph obtained from $B$
by adding an edge (call it $e'$) between the vertices in $V(F)\cap V(B)$.

It is easy to see that $G_1$ and $G_2$ are 2-connected, and any 2-edge cut
in $G_1$ (respectively, $G_2$) must separate $e$ (respectively,
$e'$) from $f'$ (respectively, $f$).
Hence Theorem~\ref{thm-main} holds for $G_1$ and $G_2$. So $G_1$ has a cycle $C_1$ such that $e,f'\in E(C_1)$ and
$w(C_1)\ge cw(G_1)^r=cw(A)^r$, and $G_2$ has a cycle $C_2$ such that $e',f\in E(C_2)$ and $w(C_2)\ge cw(G_2)^r=cw(B)^r$.
Now $C:=G[E(C_1-f')\cup E(C_2-e')\cup F]$ is a cycle in $G$ such that $e,f\in E(C)$ and $w(C)\ge cw(A)^r+cw(B)^r\geq cw(G)^r$,
completing the proof of Claim 1.

\medskip

The same argument for Claim 1 in the proof of Lemma~\ref{lem-main-a} can be used here to give

\medskip
{\it Claim 2}. No nontrivial 3-edge cut in $G$  contains $e$ or $f$.

\medskip

We now prove that we may assume

{\it Claim 3}. No 3-edge cut in $G$ separates $e$ from  $f$.

For, suppose $S=\{f_1,f_2,f_3\}$ is a 3-edge cut in $G$ and let $A,B$ denote the components of $G-S$ such that $e\in E(A)$ and
$f\in E(B)$. Clearly $G/A$ and $G/B$ are 3-connected. So Theorem~\ref{thm-main} holds for $G/A$ and $G/B$. Let the new
vertices in $G/A$ and $G/B$ resulting from contractions have weight 0, and let all other vertices of $G/A$ and $G/B$ inherit their weights
from $G$. So for each $1\le i\le 3$, $G/B$ has a cycle $C_i$ such that
$e,f_i\in E(C_i)$ and $w(C_i)\ge cw(G/B)^r=cw(A)^r$, and
$G/A$ has a cycle $C_i'$ such that $f_i,f\in E(C_i')$ and $w(C_i')\ge cw(G/A)^r=cw(B)^r$.

The sets $E(C_i)\cap S$, $1\le i\le 3$,  include at least two of $\{f_1,f_2\}$, $\{f_1,f_3\}$, $\{f_2,f_3\}$, as do the sets $E(C_j')\cap S$, $1\le j\le 3$. Therefore,
there are $i$ and $j$ such that $E(C_i)\cap S=E(C_j')\cap S$. Then $C:=G[E(C_i)\cup E(C_j')]$ is a cycle in $G$
such that $e,f\in E(C)$ and $w(C)\geq cw(A)^r + cw(B)^r \geq cw(G)^r$, completing the proof of Claim 3.

\medskip

Next we show that $e$ and $f$ cannot be very close to each other.  Specifically, we may assume

{\it Claim 4}. There is no edge adjacent to both $e$ and $f$.

For, suppose there is an edge $g=u_1u_2$ such that $u_1\in V(e)$ and $u_2\in V(f)$. Let
$e',f'$ be the edges in $G-\{e,f,g\}$ incident with $u_1,u_2$,
respectively. Then  any 2-edge cut in $G\ominus g$ separates $e=e'$ from $f=f'$,
since $G$ is 3-connected (by Claim 1). So by the assumption of this lemma, Theorem~\ref{thm-main} holds for  $G\ominus g$
and, hence, there is a cycle $C_1$ in  $G\ominus g$ such that $e=e',f=f'\in E(C_1)$ and
$w(C_1)\geq cw(G\ominus g)^r=cw(G)^r$. Now $C:=G[E(C_1)\cup \{e,e',f,f'\}]$
is a cycle in $G$ such that $e,f\in E(C)$ and $w(C)=w(C_1)\geq cw(G)^r$,
completing the proof of Claim 4.

\medskip

We now fix some notation.
Let $e=u_1u_2$ and $f=u_3u_4$. For each $1\le i\le 4$, let $e_{i1},e_{i2}$
denote the edges of $G-\{e,f\}$ incident with $u_i$.
By Claim 4, $e_{ij}\ne e_{kl}$ if $(i,j)\ne (k,l)$.
For each $1\le i\le 4$ and $j=1,2$, let $X_{ij}\subseteq G-\{u_1,u_2,u_3,u_4\}$ be maximal such that $|\partial_G(X_{ij})|=3$
 and $e_{ij}\in \partial_G(X_{ij})$. Note  $X_{ij}$ exists,
as $G$ is cubic and $V(e_{ij})-\{u_i\}$ satisfies these conditions (except
for the maximality). Since $G$ is 3-connected and cubic, each $X_{ij}$ is connected.
Moreover, by the maximality of $X_{ij}$, $X_{ij}$ is uniquely defined. Moreover, for any $i,j$,
\begin{equation}\label{eq-3connected}
 G/X_{ij}\ominus e_{ij} \mbox{ is a 3-connected cubic graph.}
\end{equation}
For, if $F$ is a 2-edge-cut in  $G/X_{ij}\ominus e_{ij}$ then $F':=F\cup \{e_{ij}\}$ is a 3-edge-cut in $G$.
However, $F'$ separates  $e$ from $f$, contradicting Claim 3.

\medskip

{\it Claim 5}.  $X_{11},X_{12},X_{21},X_{22}$ (respectively, $X_{31},X_{32},X_{41},X_{42}$) are pairwise disjoint.

By symmetry, it suffices to show that $X_{11}\cap X_{12}=\emptyset$ and $X_{11}\cap X_{21}=\emptyset$.
If $X_{11}\cap X_{12}\neq\emptyset$ then by Lemma \ref{lem-submod} and by the maximality of $X_{11}$ and $X_{12}$,
we have $X_{11}=X_{12}$; so $\partial_G(X_{11}\cup\{u_1\})$ is a 2-edge cut
in $G$, contradicting Claim 1. If $X_{11}\cap X_{21}\neq\emptyset$ then
by Lemma \ref{lem-submod} and the maximality of $X_{11}$ and $X_{21}$ we have $X_{11}=X_{21}$; so
$\partial_G(X_{11}\cup\{u_1,u_2\})$ is a 3-edge cut in $G$ separating $e$ from $f$, contradicting Claim 3.

\medskip
{\it Claim 6}. If $X_{ij}\cap X_{kl}\neq\emptyset$ for some $i\in\{1,2\}$,
$k\in\{3,4\}$, and $j,l\in \{1,2\}$
then $X_{ij}=X_{kl}$, $G/(X_{i(3-j)},X_{kl})\ominus e_{i(3-j)}\ominus e_{kl}$ is 2-connected
and every 2-edge cut in it separates $e$ from $f$, and  $G/(X_{k(3-l)},X_{ij})\ominus e_{k(3-l)}\ominus e_{ij}$ is 2-connected
and every 2-edge cut in it separates $e$ from $f$.

Note the symmetry in the statement of Claim 6. Suppose Claim 6 fails and,
without loss of generality, let $X_{11}\cap X_{31}\neq\emptyset$. By Lemma \ref{lem-submod} and the
maximality of $X_{11}$ and $X_{31}$, we have $X_{11}=X_{31}$. Let $G':=G/(X_{12},X_{31})\ominus e_{12}\ominus e_{31}$
and $G''=G/(X_{12},X_{31})\ominus e_{12}$. Then $G'=G''\ominus
e_{31}$. Since $G''=G/X_{12}\ominus e_{12}/X_{31}$ and because of
\eqref{eq-3connected},
$G''$ is 3-connected. Hence $G'$ is 2-connected. Since $G''=(G'\oplus e_{31},e,f)$ and $G''$ is 3-connected,
every 2-edge cut in $G'$ must separate $e$ from $f$, completing the proof of Claim 6.

\medskip

 We may assume

{\it Claim 7.}  $\{X_{i1},X_{i2}\}\neq\{X_{j1},X_{j2}\}$ for $1\leq i\neq j\leq 4$.

By Claim 5, we only need to prove Claim 7 for $i\in \{1,2\}$ and $j\in\{3,4\}$. By symmetry,
it suffices to show that $\{X_{11},X_{12}\}\neq\{X_{31},X_{32}\}$.  Suppose $X_{11}=X_{31}$ and $X_{12}=X_{32}$.
Let $\partial_G(X_{11})=\{e_{11},e_{31},f_1\}$ and $\partial_G(X_{12})=\{e_{12},e_{32},f_2\}$.
Since $X_{11}=X_{31}$, it follows from Claim 6 that $G':=G/(X_{12},X_{31})\ominus e_{12}\ominus e_{31}$
is 2-connected and any of its 2-edge cuts  separates $e$ from $f$. So by the assumption of this lemma,
Theorem \ref{thm-main} holds for $G'$. Assign weight 0 to the new vertices resulting from contractions, and
let the other vertices of $G'$ inherit their weights from $G$. Then there is a cycle $C_1$ in $G'$ such that
$e=e_{11}=f_1,f=e_{32}=f_2\in E(C_1)$ and $w(C_1)\geq cw(G')^r$.
Note that $C_1$ goes through the vertices representing the contractions of $X_{11}$ and $X_{12}$.  By Lemma \ref{lem-merge},
there is a cycle $C$ in $G$ such that $e,f\in E(C)$ and
\begin{eqnarray*}
w(C) &\geq & cw(G')^r+w(X_{11})^r+w(X_{12})^r\\
& \geq & c(w(G')+w(X_{11})+w(X_{12}))^r\\
& = & cw(G)^r.
\end{eqnarray*}
This completes the proof of Claim 7.

\medskip

 We may further assume

{\it Claim 8.} $\{X_{i1},X_{i2}\}\neq\{X_{jj'},X_{kk'}\}$ for $i,j',k'\in \{1,2\},j,k\in \{3,4\}$
or $j,j',k,k'\in\{1,2\},i\in \{3,4\}$.

By the assumption of this claim, $i\notin \{j,k\}$.
By Claim 7, we may assume $j\neq k$. So by symmetry, it suffices to show $\{X_{21},X_{22}\}\neq\{X_{31},X_{41}\}$. Suppose the contrary, we may assume by symmetry that  $X_{21}=X_{31}$
and $X_{22}=X_{41}$. Let $X_1:=X_{32},X_2:=X_{42}, Y_1:=X_{21}$ and
$Y_2:=X_{22}$. Further, let $\partial_G(X_1)=\{e_{32},f_{32},g_{32}\}$, $\partial_G(X_2)=\{e_{42},f_{42},g_{42}\}$,
 $\partial_G(Y_1)=\{e_{21},e_{31},h_1\}$, and
 $\partial_G(Y_2)=\{e_{22},e_{41},h_2\}$. Let $x_i:=w(X_i)$ and
 $y_i:=w(Y_i)$ for $i=1,2$, and let $z:=w(G)-x_1-x_2-y_1-y_2$.
Then $w(G)=x_1+x_2+y_1+y_2+z$. By symmetry, we may assume $$y_1\leq y_2.$$

Let $G_y:=G/(Y_1,Y_2)\ominus e_{21}\ominus e_{41}/X_2$. Assign weight 0 to the vertex resulting from the contraction of $X_2$,
and let other vertices of $G_y$ inherit their weights from $G$. Then $w(G_y)=x_1+z$.
By Claim 6 (using $X_{22}=X_{41}$), $G/(Y_1,Y_2)\ominus e_{21}\ominus e_{41}$ (and hence $G_y$) is 2-connected and every 2-edge cut separates
$e$ from $f$; so Theorem \ref{thm-main} holds for $G_y$. Thus, there is a cycle $C_1$ in $G_1$
such that $e=e_{22}=h_2,f=e_{42}\in E(C_1)$ and $w(C_1)\geq c(x_1+z)^r$.
Note that $C_1$ goes through the vertices representing the contractions of $X_2$ and $X_3$. Hence by Lemma~\ref{lem-merge}, there is a
cycle $C_y$ in $G$ such that $e,f\in E(C_y)$ and
$$w(C_y)\geq c(x_1+z)^r + x_2^r + y_2^r.$$
Therefore, we may assume $y_1>0$; as otherwise $C_y$ gives the desired cycle.

Let $G_x:=G/(Y_1,Y_2)\ominus e_{21}\ominus e_{41} /X_2/X_1\ominus e_{32}$.
Assign weight 0 to the vertex resulting from
the contradiction of $X_2$, and let all other vertices inherent their weights from $G$. Then $w(G_x)=z$. By Claim 6 (using $X_{22}=X_{41}$),
$G/(Y_1,Y_2)\ominus e_{21}\ominus e_{41} /X_2$  is 2-connected and each of its 2-edge cuts
separates $e$ from $f$. Thus, $G_x$ is 2-connected. If there is a 2-edge cut $F$ in $G_x$ not separating $e$ from $f$ then
$e,f$ lie in a common component $A$ of $G':=G/(Y_1,Y_2)\ominus e_{21}\ominus e_{41} - (F\cup \{e_{32}\})$. By Claim 6, $F\cup \{e_{32}\}$
is a minimum edge-cut in $G'$. Let $B$ be the component in $G'$ different from  $A$. Then $e_{32}\in \partial_{G}(B)$ and $|\partial_G(B\cup X_{1})|=3$,
contradicting the maximality of $X_1$. So $G_x$ is a 2-connected graph such that each of its 2-edge cuts
must separate $e$ from $f$.
Hence by assumption, Theorem \ref{thm-main} holds
for $G_x$. So there is a cycle $C_1$ in $G_x$
such that $e=e_{22}=h_2,f=e_{42}=e_{31}=h_1\in E(C_1)$ and $w(C_1)\geq cz^r$.
Note that $C_1$ goes through the vertices representing the contractions of $X_2,Y_1,Y_2$. Hence by Lemma \ref{lem-merge}, there is
a cycle $C_x$ in
$G$ such that $e,f\in E(C_x)$ and
$$w(C_x)\geq cz^r  + x_2^r + y_1^r + y_2^r.$$
Therefore, we may assume $x_1>0$; as otherwise $C_x$ gives the desired cycle.

Similarly, by using $X_{21}=X_{31}$ and considering $G_x':=G/(Y_2,Y_1)\ominus e_{22}\ominus
e_{31}/X_1/X_2 \ominus e_{42}$, we obtain a
cycle $C_x'$ in $G$ such that $e,f\in E(C_x')$ and
$$w(C_x')\geq cz^r + x_1 ^r + y_1^r + y_2^r.$$
Thus, we may assume $x_2>0$; as otherwise $C_x'$ gives the desired cycle.

Hence, it suffices to show that $\max\{w(C_x),w(C_x'),w(C_y)\}\geq
cw(G)^r=c(x_1+x_2+y_1+y_2+z)^r$. Since $x_1,x_2,y_1$ are all positive, we see that
$w(C_x)-cw(G)^r$ is increasing with respect to $x_2,y_1,y_2,z$; $w(C_y)-cw(G)^r$ is increasing with respect
to $x_1,x_2,y_2,z$; and $w(C_x')-cw(G)^r$ is increasing with respect to $x_1,y_1,y_2,z$.

Therefore, if $x_1\le x_2$ then $w(C_y)-cw(G)^r\ge cx_1^r+x_1^r+y_1^r-c(2x_1+2y_1)^r$
and $w(C_x)-cw(G)^r\ge x_1^r+2y_1^r-c(2x_1+2y_1)^r$; and if $x_2\le x_1$
then $w(C_y)-cw(G)^r\ge cx_2^r+x_2^r+y_1^r-c(2x_2+2y_1)^r$
and $w(C_x')-cw(G)^r\ge x_2^r+2y_1^r-c(2x_2+2y_1)^r$.
Let
\begin{align*}
  f_1(x,y) &:=x^r +2y^r -c(2x+2y)^r,\\
  f_2(x,y) &:=cx^r +x^r+y^r-c(2x+2y)^r.
\end{align*}
Then it suffices to show $\max\{f_1,f_2\}\geq 0$ for all $x,y\in \mathbb{Z}^+$.
For any $x,y\in \mathbb Z^+$, if $x\leq y$ then by
Lemma \ref{lem-inequality}(v) we have $f_1(x,y)\geq 0$. So we may assume
$x>y$. Then also by Lemma \ref{lem-inequality}(v),
$f_2(x,y)\geq 0$. This completes the proof of Claim 8.

\medskip

Let $q:=|\{(X_{ij},X_{kl}): X_{ij}\cap X_{kl}\ne \emptyset \mbox{ and }
(i,j)\ne (k,l)\}|$. By Claims 5 and 8,
we have $0\leq q\leq 2$. So we have three cases to consider.  Let
$x_{ij}:=w(X_{ij})$ for $1\le i\le 4$ and $j=1,2$.

\medskip
\textit{Case 1.} $q=0$.

 By symmetry, we may assume that
$x_{11}\leq x_{ij}$ for $1\le i\le 4$ and $j=1,2$. Let $\partial_G(X_{12})=\{e_{12},f_{1},f_{2}\}$, and let
$z:=w(G-\bigcup X_{ij})$. Then $w(G)=\sum x_{ij}+z$. (The union and summation are taken over all  $1\le i\le 4$ and $j=1,2$.)

Consider $G_x:=(G/X_{11}\ominus e_{11})/X_{12}$. Assign weight 0 to the vertex resulting from the contraction of $X_{12}$,
 and let other vertices of $G_x$ inherit their  weights from $G$.
Then by \eqref{eq-3connected}, $G_x$ is 3-connected and cubic, and $w(G_x)=w(G)-x_{11}-x_{12}$. So by assumption,
Theorem \ref{thm-main} holds for $G_x$. Thus there is a cycle $C_1$ in $G_x$ such that $e=e_{12},f\in E(C_1)$ and
$w(C_1)\geq cw(G_x)^r$.
Note that $C_1$ goes through the vertex representing the contraction of $X_{12}$. By Lemma \ref{lem-merge}, there is a
 cycle $C$ in $G$ such that $e,f\in E(C)$ and
\[
 w(C)\geq cw(G_x)^r+x_{12}^r=c(x_{21}+x_{22}+x_{31}+x_{32}+x_{41}+x_{42}+z)^r+x_{12}^r.
\]
Since $x_{12}\ge x_{11}$ and $x_{21}+x_{22}+x_{31}+x_{32}+x_{41}+x_{42}+z\geq 6x_{11}$, it follows from
Lemma \ref{lem-inequality}(vi) that $w(C)\geq cw(G)^r$.

\medskip

\textit{Case 2.} $q=1$.

By symmetry, we may assume that $X_{22}\cap X_{32}\neq\emptyset$. Then $X_{22}=X_{32}$ by Lemma \ref{lem-submod} and
the maximality of $X_{22}$ and $X_{32}$. Let $\partial_G(X_{ij})=\{e_{ij},f_{ij},g_{ij}\}$ for
$X_{ij}\notin \{X_{22},X_{32}\}$, let $\partial_G(X_{22})=\{e_{22},e_{32},h_1\}$,
and let $Y:=X_{22}$.
Let  $y:=w(Y_1)=x_{22}=x_{32}$
and $z:=w(G-\bigcup X_{ij})$, where the union is taken over all $1\le i\le 4$ and $j=1,2$. Then
$$w(G)=x_{11}+x_{12}+x_{21}+x_{31}+x_{41}+x_{42} + y +z.$$
By symmetry, we may assume $x_{11}=\min\{x_{11},x_{12},x_{41},x_{42}\}$.
We now find cycles in $G$ and show that one of these is the desired cycle.

Let $G_1:=G/X_{11}\ominus e_{11}/X_{12}$. Assign weight 0 to the vertex resulting
from the contraction of $X_{12}$ and let the other vertices of $G_1$ inherit their weights from $G$. Then
$$w(G_1)=w(G)-x_{11}-x_{12}=x_{21}+x_{31}+x_{41}+x_{42}+y+z.$$
By \eqref{eq-3connected}, $G_1$ is 3-connected. Hence by assumption, Theorem \ref{thm-main} holds for $G_1$.
So there is a cycle $C_1$ in $G_1$ such that $e=e_{12},f\in E(G_1)$
and  $w(C_1)\geq cw(G_1)^r$.
Note that $C_1$ goes through the vertex representing the contraction of $X_{12}$. Hence by Lemma~\ref{lem-merge}, there is
a cycle $C_x$ in $G$ such that $e,f\in E(C_x)$  and
\[
 w(C_x)\geq cw(G_1)^r+x_{12}^r=c(x_{21}+x_{31}+x_{41}+x_{42}+y+z)^r+x_{12}^r.
\]

Let $G_2:=G/(X_{21},X_{32})\ominus
e_{21}\ominus e_{32}/X_{31}$. Assign weight 0
to the vertex resulting from the contraction of $X_{31}$, and let the other vertices of $G_2$ inherit their weights from $G$.
Then $$w(G_2)=x_{11}+x_{12}+x_{41}+x_{42}+z.$$
By Claim 6, $G/(X_{21},X_{32})\ominus e_{21}\ominus e_{32}$ (and hence
$G_2$) is 2-connected and any of its 2-edge cuts  separates $e$ from $f$.
By the assumption of this lemma, Theorem \ref{thm-main} holds for $G_2$. So
there is a cycle $C_1$ in $G_2$ such that $e=e_{22}=h_1,f=e_{31}\in E(C_1)$
and $w(C_1)\geq cw(G_2)^r$.
Note that $C_1$ goes through the vertices representing the contractions of $X_{31},Y$. Hence, by Lemma~\ref{lem-merge}, there is
a cycle $C_x'$ in $G$ such that $e,f\in E(C)$ and
\[
 w(C_x')\geq c(x_{11}+x_{12}+x_{41}+x_{42}+z)^r + x_{31}^r +y^r.
\]

Similarly, by considering $G_2':=G/(X_{31},X_{22})\ominus e_{31}\ominus e_{22}/X_{21}$  we obtain a
cycle $C_x''$ in $G$ such that $e,f\in E(C_x'')$ and
\[
 w(C_x'')\geq c(x_{11}+x_{12}+x_{41}+x_{42}+z)^r + x_{21}^r +y^r.
\]

Finally, let $G_3:=G/Y\ominus e_{22}/X_{21}$. Assign weight 0 to the vertex resulting
from the contraction of $X_{21}$ and let all other vertices of $G_3$ inherit their weights from $G$.
Then $$w(G_3)=x_{11}+x_{12}+x_{31}+x_{41}+x_{42}+z.$$
By \eqref{eq-3connected}, $G_3$ is 3-connected.  By assumption, Theorem \ref{thm-main} holds for $G_3$.
So there is a cycle $C_1$ in $G_3$ such that $e=e_{21},f\in E(C_1)$ and $w(C_1)\geq cw(G_3)^r$.
Note that $C_1$ goes through the vertex representing the contraction of $X_{21}$.  By Lemma \ref{lem-merge}, there is
a cycle $C_y$ in $G$ such that $e,f\in E(C_y)$ and
\[
 w(C_y)\geq  c(x_{11}+x_{12}+x_{31}+x_{41}+x_{42}+z)^r + x_{21}^r.
\]

It suffices to  show that if $x_{21}\le x_{31}$ then $\max\{w(C_x),w(C_x'),w(C_y)\}\geq
cw(G)^r$, and if $x_{31}\le x_{21}$ then  $\max\{w(C_x),w(C_x''),w(C_y)\}\geq
cw(G)^r$.

We may assume $x_{11}>0, x_{21}>0, x_{31}>0, y>0$; otherwise $C_x$ or $C_x'$ or $C_x''$ or $C_y$ is the desired  cycle.
Therefore, $x_{12}>0$, $x_{41}>0$ and $x_{42}>0$.
Thus, it is easy to see that
the functions $w(C_x)-cw(G)^r$, $w(C_x')-cw(G)^r$ and $w(C_y)-cw(G)^r$ are
increasing with respect to each of $x_{12},x_{41},x_{42}$, $x_{31}$ and
$z$; and the function $w(C_x'')-cw(G)^r$ is increasing with respect to each of  $x_{12},x_{41},x_{42}$, $x_{21}$ and $z$.

Suppose $x_{31}\ge x_{21}$.  Since $x_{11}=\min \{x_{11}, x_{12}, x_{41},x_{42}\}$ and $z\ge 0$, we have
\begin{align*}
& w(C_x)-cw(G)^r\ge f_1(x_{11},x_{21},y) := c(2x_{11}+2x_{21}+y)^r+x_{11}^r-c(4x_{11}+2x_{21}+y)^r,\\
& w(C_x')-cw(G)^r\ge f_2(x_{11},x_{21},y): = c(4x_{11})^r + x_{21}^r + y^r -c(4x_{11}+2x_{21}+y)^r,\\
& w(C_y)-cw(G)^r\ge f_3(x_{11},x_{21},y) := c(4x_{11}+x_{21})^r + x_{21}^r -c(4x_{11}+2x_{21}+y)^r.
\end{align*}
For the case when $x_{21}\ge x_{31}$, we use, in the above expressions,  $C_x'',x_{31}$ instead of $C_x',x_{21}$, respectively. So we only consider
the case   $x_{31}\ge x_{21}$.

We now prove that for $x_{11},x_{21},y\in \mathbb{Z}^+$,
$\max\{f_1,f_2,f_3\}\geq 0$. Suppose that this is not true. Then there exist $x_{11},x_{21},y\in \mathbb{Z}^+$
such that $f_i(x_{11},x_{21},y)<0$ for $1\le i\le 3$. Now  $f_1<0$ and Lemma \ref{lem-inequality}(vi) imply
$2x_{11}+2x_{21}+y<6x_{11}$. Hence $2x_{21}+y<4x_{11}.$ Also  $f_2<0$ and Lemma \ref{lem-inequality}(v) imply
$y<x_{21}$ as $4x_{11}>2x_{21}+y>x_{21}$. Finally, $f_3<0$ and Lemma \ref{lem-inequality}(vi) imply
$4x_{11}+x_{21}<6y$ as $x_{21}>y$. Therefore, an easy calculation shows that
\begin{equation}\label{eq-q=1}
 4x_{11}+y<6x_{21}, \quad  x_{21}<2y,  \mbox{ and } 4x_{11}>3y.
\end{equation}
It is easy to see that $f_2$ is increasing with respect to $x_{11}$.
By differentiating $f_2$ with respect to $x_{21}$, we have
$$\frac{\partial f_2}{\partial x_{21}}=\frac{r}{x_{21}^{1-r}}-\frac{2cr}{(4x_{11}+2x_{21}+y)^{1-r}}.$$
So $\partial f_2/\partial x_{21}\geq 0$ if and only if $4x_{11}+2x_{21}+y\geq (2c)^{1/(1-r)}x_{21}\approx 21.275x_{21}$.
Thus, from \eqref{eq-q=1} we see that $\partial f_2/\partial x_{21}<0$. So
$f_2$ is decreasing with respect to $x_{21}$.
Hence, by \eqref{eq-q=1}, we have $$f_2\geq f_2(3y/4,2y,y)=c(3y)^r+(2y)^r+y^r-c(8y)^r=(3^rc+2^r+1-8^rc)y^r\approx0.096y^r> 0,$$
a contradiction.

\medskip
\textit{Case 3.} $q=2$.

By Claims 5, 7 and 8 and by symmetry, we may assume that $X_{12}=X_{32}$ and $X_{22}=X_{42}$.
Let $\partial_G(X_{i1})=\{e_{i1},f_{i1},g_{i1}\}$ for $1\le i\le 4$,
$Y_i:=X_{i2}$ for $i=1,2$, $\partial_G(Y_1)=\{e_{12},e_{32},h_1\}$, $\partial_G(Y_2)=\{e_{22},e_{42},h_2\}$.
Let $x_i:=w(X_{i1})$ for $1\le i\le 4$, $y_i:=w(Y_i)$ for $i=1,2$, and $z:=w(G-\bigcup X_{ij})$ (where the union is taken over all $i,j$).
Then $$w(G)=\sum_{i=1}^4 x_i+y_1+y_2+z.$$
By symmetry, we may assume $x_{1}=\min\{x_{1},x_{2},x_{3},x_{4}\}$.

Let $G_1:=G/(X_{11},Y_1)\ominus e_{11}\ominus e_{32}/X_{31}$.
Assign weight 0 to the vertex resulting from the contraction of $X_{31}$ and let the other vertices of $G_1$ inherit their weights from $G$.
Then $$w(G_1)=x_{2}+x_{4}+y_2+z.$$ Recall that $Y_1=X_{32}$; so by Claim 6, $G/(X_{11},Y_1)\ominus e_{11}\ominus e_{32}$
(and hence $G_1$) satisfies the assumption of our lemma. Therefore, Theorem \ref{thm-main} holds for $G_1$.
So there is a cycle $C_1$ in $G_1$ such that $e=e_{12}=h_1,f=e_{31}\in E(C_1)$ and $w(C_1)\geq cw(G_1)^r$.
Note that $C_1$ goes through the vertices representing the contractions of $X_{31},Y_1$. Hence, by Lemma \ref{lem-merge}, there is
a cycle $C_x$ in $G$ such that $e,f\in E(C_x)$ and
\[
 w(C_x)\geq c(x_{2}+x_{4}+y_2+z)^r + x_3^r+y_1^r.
\]

Let $G_2:=G/Y_1 \ominus e_{12}/X_{11}$. Assign weight 0 to the
vertex resulting from the contraction of $X_{11}$, and let the other vertices of $G_2$ inherit their weights from $G$.
Then by \eqref{eq-3connected}, $G_2$ is 3-connected;  and
$$w(G_2)=w(G)-x_1-y_1=x_2+x_3+x_4+y_2+z.$$
By assumption, Theorem \ref{thm-main} holds for $G_2$.
So there is a cycle $C_1$ in $G_2$ such that $e=e_{11},f\in E(C_1)$ and $w(C_1)\geq cw(G_2)^r$.
Note that $C_1$ goes through the vertex representing the contraction of $X_{11}$. So by Lemma~\ref{lem-merge}, there is
a cycle $C_y$ in  $G$ such that $e,f\in E(C_y)$ and
\[
 w(C_y)\geq c(x_{2}+x_{3}+x_{4}+y_2+z)^r +x_{1}^r.
\]

We now show that $\max\{w(C_x),w(C_y)\}\geq cw(G)^r$. We may assume $x_1>0$ and $y_1>0$; otherwise, $C_x$ or $C_y$ is the
desired cycle. Therefore $x_i\ge x_1>0$ for $i=2,3,4$.
It is easy to see that the functions $w(C_x)-cw(G)^r$ and $w(C_y)-cw(G)^r$ are increasing with respect
to each of $x_{2},x_{3},x_{4},y_2$ and $z$. 
Thus,
\begin{align*}
  &w(C_x)-cw(G)^r\ge f_1(x_1,y_1):=c(2x_1)^r+x_1^r+y_1^r-c(4x_1+y_1)^r,\\
  &w(C_y)-cw(G)^r\ge f_2(x_1,y_1):=c(3x_1)^r+x_1^r-c(4x_1+y_1)^r.
\end{align*}
It suffices to show that $\max\{f_1,f_2\}\geq 0$ for all $x_1,y_1\in \mathbb{Z}^+$. Suppose that this is not true. Then $f_1(x_1,y_1)<0$ and $f_2(x_1,y_1)<0$ for some $x_1,y_1\geq0$.
Since $f_1<0$, it follows from Lemma~\ref{lem-inequality}(v) that $y_1<x_1$. Hence, since $f_2<0$, it follows from
Lemma~\ref{lem-inequality}(vi) that $3x_1<6y_1$. Thus,  $y_1>x_1/2$. Since $f_1$ is increasing
with respect to $y_1$, $f_1\geq f_1(x_1,x_1/2)\geq c(2x_1)^r+x_1^r +(x_1/2)^r -c(9x_1/2)^r\approx 0.109x_{1}^r\geq0$,
a contradiction. \end{proof}

\section{Conclusion}

We can now complete the proof of Theorem \ref{thm-main}. We apply induction
on $|G|$, the order of $G$. If $|G|=2$ then $\{e,f\}$ is contained in a Hamilton cycle;
so Theorem~\ref{thm-main} holds.
Thus we may assume that $|G|\ge 4$ and that Theorem~\ref{thm-main} holds for graphs of
order less than $|G|$. By the remark in Section 1, we may further assume that the weight function satisfies
$w(V(\{e,f\}))=0$ for the  edges $e,f$.
Then by Lemma \ref{lem-main-a}, Theorem~\ref{thm-main}(a) holds for $G$;  and by Lemma \ref{lem-main-b},
Theorem~\ref{thm-main}(b) holds for $G$. \qed

\medskip


The exponent $0.8$ in Theorem~\ref{thm-main} can be increased to the root
of the \myequation,  which is larger than $0.800008$. This can be done by modifying  some calculations, because this root
is the largest number satisfying Lemma \ref{lem-inequality} (more precisely, Lemma \ref{lem-inequality}(i)).

In \cite{bilinski}, the problem for bounding the circumference of cubic
graphs is reduced to one for finding a large Eulerian subgraph
in a 3-edge-connected graph. We take an opposite
approach, and  prove the
following result as a consequence of Theorem~\ref{thm-main}. However, the
result we have is about vertex-weighted graphs while the result in
\cite{bilinski} is about edge-weighted graphs.

\begin{coro}\label{eulerian}
Let $r=0.8$ and let $c=1/(8^r-6^r)\approx 0.922.$
Let $G$ be a 3-edge connected graph,  let $w: V(G)\rightarrow \mathbb{Z}^+$, and $e,f\in E(G)$.
Then $G$ has an Eulerian subgraph $H$ such that
\begin{itemize}
\item [$(a)$] $e\in E(H)$ and  $w(H)\geq w(G)^{r}$, or
\item [$(b)$] $e,f\in E(H)$ and $w(H)\geq cw(G)^{r}$.
\end{itemize}
\end{coro}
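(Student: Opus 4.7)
My plan is to reduce Corollary \ref{eulerian} to Theorem \ref{thm-main} via a standard cubic inflation. Given a $3$-edge-connected graph $G$, I would construct a cubic graph $G^*$ by replacing each vertex $v\in V(G)$ of degree $d_G(v)\geq 4$ with a cycle $C_v$ of length $d_G(v)$, distributing the $d_G(v)$ edges originally incident with $v$ to distinct vertices of $C_v$; vertices of degree $3$ are left unchanged. Each vertex of $G^*$ then has degree exactly $3$, so $G^*$ is cubic. For each $v\in V(G)$ I would pick one vertex $u_v\in V(C_v)$ (with $u_v:=v$ if $v$ was not inflated) and set $w^*(u_v):=w(v)$ and $w^*(x):=0$ for every other vertex $x$ of $G^*$, which ensures $w^*(G^*)=w(G)$.

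The first step will be to verify that $G^*$ is $3$-edge-connected, so that Theorem \ref{thm-main} applies to $G^*$ with its $2$-edge-cut hypothesis satisfied vacuously. To see this, I would take a putative $2$-edge cut $F$ of $G^*$ and split into cases: if $F\subseteq E(G)$, then $G-F$ is connected by $3$-edge-connectivity of $G$, hence so is $G^*-F$; if $F$ lies inside a single cycle $C_v$, then the two arcs of $C_v-F$ each retain at least one external edge, and these external edges connect via $G-v$, which is connected (as $G$ is $3$-edge-connected, hence $3$-connected); the mixed cases (one edge internal and one external, or two edges in two distinct cycles $C_v,C_{v'}$) are handled by the same path-lifting argument using that $G$ minus any single edge is still $2$-edge-connected. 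Since $G^*$ is cubic and $3$-edge-connected, it is $2$-connected and has no $2$-edge cuts at all.

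I would then apply Theorem \ref{thm-main} to $(G^*,e,f)$: if $e$ and $f$ are adjacent in $G^*$, part $(a)$ produces a cycle $C^*$ in $G^*$ through $e,f$ with $w^*(C^*)\geq w^*(G^*)^r = w(G)^r$; otherwise part $(b)$ produces such a cycle with $w^*(C^*)\geq c\,w(G)^r$. Finally I would translate $C^*$ back to a subgraph $H$ of $G$ consisting of all external edges appearing in $C^*$ (viewed as edges of $G$), together with their endpoints. Each time $C^*$ enters $C_v$ it must leave, using exactly two external edges of $C_v$, so every $v\in V(H)$ has even degree in $H$; and contracting each $C_v$ back to $v$ realizes $H$ as a closed walk in $G$, so $H$ is connected, hence Eulerian. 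Since $u_v\in V(C^*)$ forces $v\in V(H)$, we get
\[
w(H)=\sum_{v\in V(H)}w(v) \ \geq\ \sum_{v:\,u_v\in V(C^*)}w(v) \ =\ w^*(C^*),
\]
and the required conclusion $(a)$ or $(b)$ follows in the two cases. The main technical obstacle will be the $3$-edge-connectivity verification for $G^*$, in particular the mixed case, where one must carefully route a connecting path around both a deleted external edge of $G$ and a deleted internal edge of some $C_v$.
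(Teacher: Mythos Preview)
Your overall plan---inflate $G$ to a cubic graph, transfer the weights, apply Theorem~\ref{thm-main}, and contract back to an Eulerian subgraph---is exactly the paper's approach. However, there is a genuine gap in your $3$-edge-connectivity argument for $G^*$.

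The sentence ``$G-v$ is connected (as $G$ is $3$-edge-connected, hence $3$-connected)'' is false: $3$-edge-connected graphs need not be $2$-connected. Take two copies of $K_4$ identified at a single vertex $v$; this graph is $3$-edge-connected but $v$ is a cut vertex. Now in your construction you replace $v$ by a $6$-cycle $C_v$ and distribute the six incident edges to the six cycle vertices \emph{in some unspecified order}. If the three edges to one copy of $K_4$ land on three consecutive cycle vertices and the three edges to the other copy on the remaining three, then the two cycle edges separating these arcs form a $2$-edge cut of $G^*$. So with an arbitrary cyclic distribution, $G^*$ can fail to be $3$-edge-connected, and Theorem~\ref{thm-main} does not apply.

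The paper confronts exactly this issue. It inflates one vertex $u$ at a time and, when $G-u$ is disconnected with components $C_1,\dots,C_k$, it places the external edges on the cycle $C_u$ in a specific interleaved order (listing the edges to each $C_s$ consecutively and then swapping $v_s^{n_s}$ with $v_{s+1}^1$ for every $s$). This interleaving is precisely what guarantees that no two edges of $C_u$ form a $2$-edge cut of the inflated graph; the paper then checks directly that the final cubic graph $L$ is $3$-connected. Your argument can be repaired by adopting this (or an equivalent) interleaving and replacing the appeal to ``$G-v$ connected'' by the correct case analysis.
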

\begin{proof} First, we describe a process that constructs a 3-connected cubic graph $L$ from $G$ such that any cycle $C$ in $L$
gives rise to an Eulerian subgraph in $G$ with weight $w(C)$.

\begin{itemize}
\item Pick an arbitrary  $u\in V(G)$ with degree at least 4. If no such vertex exists, let $L:=G$.

\item Suppose $G-u$ is connected. Let $w_i$, $1\le i\le k$,  be
the neighbors of $u$. Let $G_1$ be obtained form $G-u$ by
adding a cycle $v_1v_2\ldots v_kv_1$ (disjoint from $G-u$)
and the edges $w_iv_i$, for $i=1, \ldots, k$. (So no $v_i$ is a cut vertex of  $G_1$ and $G_1$ is 3-edge-connected.)
Extend the weight function $w$ by letting $w(v_1)=w(u)$, and $w(v_i)=0$ for $2\le i\le k$.

\item Suppose $G-u$ is not connected. Let $C_1, \ldots, C_k$ denote the components of $G-u$ and, for each $1\le s\le k$, let
$w_s^1, \ldots, w_s^{n_s}$ be the neighbors of $u$ in $C_s$.
Give each $w_i^j$ a new neighbor $v_i^j$, arrange the $v_i^j$'s in the natural order
$v_1^1v_1^2\ldots v_1^{n_1}v_2^1v_2^2\ldots v_2^{n_2}\ldots v_k^1\ldots v_k^{n_k}$, then swap $v_s^{n_s}$ and $v_{s+1}^1$
for each $s=1,2,\ldots, k$, where $k+1$ is taken as 1, and use this order to construct a cycle on the $v_i^j$'s.
Extend the weight function $w$ by letting $w(v_1^1)=w(u)$, and $w(v_i^j)=0$ for $(i,j)\ne (1,1)$,  $1\le i\le k$ and $1\leq j\leq n_i$.

\item Repeat the above steps for $G_1$, and so on, until we arrive at a cubic graph $L$.
\end{itemize}

From the construction of $L$, we see that each $u\in V(G)$ with degree at least 4 in $G$ corresponds to
a cycle $C_u$. For those edges of $L$ which are not in the cycles $C_u$, we view them as edges of $G$ and use the
same notation in both $G$ and $L$.

We now show that $L$ is 3-connected. Clearly, $L$ is 2-connected and, since $L$ is cubic, it suffices to show that $L$ is
3-edge-connected.
Let $F=\{e_1,e_2\}$ be a 2-edge cut of $L$. Then since $G$ is 3-edge-connected, $F\not\subseteq E(G)$.
For $i\in \{1,2\}$, since $e_{3-i}$ is a cut edge of
$L-e_i$,   if $e_i\in E(G)$ then $e_{3-i}$ cannot lie in any cycle $C_u$, which implies $e_{3-i}\in E(G)$, a contradiction. Hence $e_1,e_2\notin
E(G)$. Since $\{e_1,e_2\}$ is a 2-edge cut in $L$,
$e_1,e_2$ cannot lie in two disjoint cycles in $L$. Thus, both $e_1$ and $e_2$ lie in the cycle $C_u$ in $L$ corresponding
to some $u\in V(G)$; so  $u$ is a cut vertex of $G$. Let $P_1,P_2$ be the components of $C_u-\{e_1,e_2\}$
and, for each $i\in \{1,2\}$, let
$L_i$ denote the component of $L-\{e_1,e_2\}$ containing $P_i$. Since $L$ is cubic, $L_i\neq P_i$. On the other hand,
since $u$ is a cut vertex in $G$ and because of the construction of $L$,
$P_1$ contains some neighbor of $L_2$ and $P_2$ contains some
neighbor of $L_1$, showing that $L-\{e_1,e_2\}$ is connected, a contradiction.
 Hence, $L$ is 3-connected.

Since $e\in E(G)$, $e\in E(L)$. By Theorem~\ref{thm-main}(a), there is a cycle $C'$ in $L$ such
that $e\in E(C')$ and $w(C')\ge w(L)^r$. By contracting all cycles $C_u$ of $L$ corresponding to
vertices $u$ of $G$ back to $u$, we obtain from $C'$ an Eulerian subgraph $H$ such that $e\in E(H)$ and $w(H)\ge
w(G)^r$. Likewise, by Theorem~\ref{thm-main}(b) we find an Eulerian subgraph $H$ such that $e,f\in E(H)$ and $w(H)\ge
cw(G)^r$.
\end{proof}

\medskip

Recall the upper bound $\Theta(n^{\log_98})$ on the circumference of
3-connected cubic graphs,  provided by a construction of
Bondy and Simonovits in \cite{bondy}. The lower bound $\Omega(n^{0.8})$ in
Theorem~\ref{thm-main} is significant in the sense that the exponent breaks
above $\log_43\approx 0.7925$. So we feel that one may be able to improve
the exponent further to $\log_54 \approx 0.861$.

We have used Lemma~\ref{lem-3-edge} to find cycles in cubic graphs through given vertices and edges. There are other results
concerning cycles in cubic graphs which might be useful for further
improving the bound given in Theorem~\ref{thm-main}.
We refer the reader to \cite{aldred2,aldred3,bau}.

\bigskip
{\sc Acknowledgment}. We thank the anonymous referee for a through reading of the original manuscript and very detailed
and helpful suggestions.

\newpage

\end{document}